\theoremstyle{plain}
\newtheorem{theorem}{Theorem} 
\newtheorem{proposition}{Proposition}
\newtheorem{corollary}{Corollary}
\newtheorem{lemma}{Lemma}
\newcommand{\nwc}{\newcommand}
\nwc{\qref}[1]{(\ref{#1})}
\nwc{\cadlag}{c\`{a}dl\`{a}g}
\nwc{\la}{\label}
\nwc{\nn}{\nonumber}
\nwc{\ee}{\text{e}}
\nwc{\Z}{\mathbb{Z}}
\nwc{\C}{\mathbb{C}}
\nwc{\E}{\mathbb{E}}
\nwc{\R}{\mathbb{R}}
\nwc{\N}{\mathbb{N}}
\nwc{\Mn}{\mathbb{M}_N}
\nwc{\PP}{\mathcal{P}}
\nwc{\sub}{\mathcal{S}}
\nwc{\M}{\mathcal{M}}
\nwc{\law}{\stackrel{\mathcal{L}}{\rightarrow}}
\nwc{\eqd}{\stackrel{\mathcal{L}}{=}}
\nwc{\vp}{\varphi}
\nwc{\Vp}{\Phi}
\nwc{\ve}{\varepsilon}
\nwc{\veps}{\varepsilon}
\nwc{\eps}{\ve}
\nwc{\dnto}{\downarrow}
\nwc{\nsup}{^{(n)}}
\nwc{\ksup}{^{(k)}}
\nwc{\jsup}{^{(j)}}
\nwc{\nksup}{^{(n_k)}}
\nwc{\inv}{^{-1}}
\nwc{\argmin}{\mathrm{argmin}}
\nwc{\argmax}{\mathrm{argmax}}
\nwc{\Stoch}{\mathfrak{Q}}
\nwc{\ad}{\mathrm{ad}}
\nwc{\Ad}{\mathrm{Ad}}
\nwc{\diag}{\mathrm{diag}}
\nwc{\dd}{\mathfrak{d}}
\nwc{\gln}{\mathfrak{g}}
\nwc{\unn}{\mathfrak{u}}
\nwc{\lotri}{\mathfrak{l}}
\nwc{\GLM}{GL(m,\C)}
\nwc{\proj}{P}
\nwc{\Tr}{\mathrm{Tr}}
\nwc{\Id}{\mathrm{Id}}
\nwc{\son}{\mathfrak{so}(n)}
\nwc{\so}{\mathfrak{so}}
\nwc{\SO}{\mathsf{SO}}
\nwc{\SOn}{\mathsf{SO}(n)}
\nwc{\Symm}{\mathrm{Symm}}
\nwc{\SymmN}{\mathsf{Symm}(N)}
\nwc{\MM}{\mathcal{M}}
\nwc{\TT}{\mathbb{T}}
\nwc{\Rq}{\mathbb{R}^q}
\nwc{\GL}{\mathsf{GL}}
\nwc{\schwartz}{\mathcal{S}}
\nwc{\laplacian}{\triangle}
\nwc{\eigen}{\psi}
\nwc{\one}{\mathbf{1}}
\nwc{\id}{I}
\nwc{\Lap}{\mathcal{L}}
\nwc{\Sz}{Sz\'{e}kelyhidi}
\nwc{\Poincare}{Poincar\'{e}}
\nwc{\Mobius}{M\"{o}bius}
\nwc{\Arnold}{Arnol'{d}}
\nwc{\Muller}{M\"{u}ller}
\nwc{\Sverak}{\v{S}ver\'{a}k}
\nwc{\Berard}{B\'{e}rard}
\nwc{\Ampere}{Amp\`{e}re}
\nwc{\Scal}{\mathrm{Scal}}
\nwc{\Ric}{\mathrm{Ric}}
\nwc{\nnn}{\mathbf{n}}
\nwc{\bb}{\mathbf{b}}
\nwc{\Holder}{H\"{o}lder}
\nwc{\Levy}{L\'{e}vy}
\nwc{\T}{\mathbb{T}}
\nwc{\Varphi}{\Phi}
\nwc{\Ito}{It\^{o}}
\nwc{\vol}{\mathrm{vol}}
\nwc{\Vol}{\mathrm{Vol}}
\nwc{\rhoeq}{\rho_{\mathrm{eq}}}
\nwc{\matn}{\mathbb{M}}
\nwc{\grad}{\mathrm{grad}}
\nwc{\Lie}{\mathcal{L}}
\nwc{\Hess}{\mathrm{Hess}}
\nwc{\res}{\varepsilon}
\nwc{\ressq}{\tilde{\res}}
\nwc{\HH}{\mathcal{H}}
\nwc{\hern}{\mathbb{H}(n)}
\nwc{\On}{\mathbb{O}(n)}
\nwc{\DD}{\mathcal{D}}
\newcommand{\RN}[1]{%
  \textup{\uppercase\expandafter{\romannumeral#1}}%
}
\theoremstyle{definition}
\newtheorem{definition}[lemma]{Definition} 
\newtheorem{remark}[theorem]{Remark}
\theoremstyle{remark}
\newtheorem{notation}{Notation} 
\numberwithin{equation}{section}
\numberwithin{figure}{section}
\newcommand{\ud}{\mathrm{d}}
\newcommand{\bd}{\boldsymbol}
\newcommand{\ii}{\ensuremath{\boldsymbol{i}}}
\begin{document}
\title{Siegel Brownian motion}
\author{Govind Menon}
\author{Tianmin Yu}
\address{Division of Applied Mathematics, Brown University, 182 George St., Providence, RI 02912. }

\email{govind\_menon@brown.edu}
\email{tianmin\_yu@brown.edu}

\thanks{This work has been supported in part by the National Science Foundation (DMS 2107205). GM also acknowledges partial support from JSPS KAKENHI grants JP18H01124 and JP20K20884 and expresses his gratitude to the Research Institute for Mathematical Sciences at Kyoto University and the Department of Mathematics, Kyushu University during the completion of this work.}

\subjclass[2010]{58G32,60D05,30.0X}
\keywords{Brownian motion on Riemannian manifolds, Dyson Brownian motion, Symplectic geometry, Symmetric space}

\begin{abstract}
We construct an analogue of Dyson Brownian motion in the Siegel half-space $\HH$ that we term {\em Siegel Brownian motion\/}. Given $\beta \in (0,\infty]$, a stochastic flow for $Z_t\in \HH$ is introduced so that the law of the eigenvalues $\lambda_t$ of the cross ratio matrix $\mathfrak R(Z_t,\ii I_n)$ is determined, after a change of variables to $\sigma(\lambda) \in (0,\infty)^n$, 
by the \Ito\/ differential equation
\begin{equation}
\label{eq:siegel-eig3} 
\ud \sigma^k_t=\frac12\left(\coth{\sigma^k_t}+\sum_{l\neq k}\frac{\sinh{\sigma^k_t}}{\cosh{\sigma^k_t}-\cosh{\sigma^l_t}}\right)\ud t+\sqrt{\frac2\beta}\ud W^k_t, \quad k=1,\ldots, n,
\end{equation}
where $W_t$ is a standard Wiener process in $\R^n$.

This interacting particle system corresponds to stochastic gradient ascent 
\begin{equation}
  \label{eq:evol-sigma-b}
        \ud \sigma_t= \frac{1}{2}\nabla S(\sigma_t) +\sqrt{\frac2\beta}\ud W_t, 
\end{equation}    
where $S(\sigma)= \log \mathrm{vol}\,\mathcal{O}_{\lambda(\sigma)}$ is a Boltzmann entropy that enumerates the microstates in the group orbit 
$\mathcal{O}_\lambda = \{Z \in \HH\left| \mathrm{eig}\left(\mathfrak R(Z,\ii I_n)\right)=\lambda \right. \}$. In the limit $\beta=\infty$, the group orbits $\mathcal{O}_{\lambda_t}$ evolve by motion by minus a half times mean curvature.

\end{abstract}

\maketitle

\section{Introduction}
\label{sec:intro}
\subsection{Outline}
In a recent paper, the first author along with Huang and Inauen, introduced a geometric construction of Dyson Brownian motion~\cite{HIM22}. This paper uses this framework to derive an analogue of Dyson Brownian motion in a symmetric space with negative curvature. We focus on the Siegel half-space $\HH$ for clarity. While the method can be generalized, it is best understood through this example.

The space $\mathcal H$ consists of $n\times n$ complex symmetric matrices whose imaginary part is positive definite, written 
\begin{equation}
\label{eq:siegel1}  
\mathcal H= \big\{ Z=X+\ii \,Y\big| X=X^T, \;Y=Y^T, \; Y\succ 0\big\}.
\end{equation}
It is equipped with a metric $g_\HH$ expressed in terms of the 1-forms $dZ$ as follows
\begin{equation}
\label{eq:siegel2}
    g_\HH=\Tr(Y^{-1}\ud Z\,Y^{-1}\ud \bar Z). 
\end{equation}

This space was first studied in depth by Siegel~\cite{Siegel}. He showed that $\mathcal{H}$ is a symmetric space with negative curvature that generalizes the \Poincare\/ upper half-plane~\cite[Thms 1-4]{Siegel}.  \Mobius\/ transformations of the \Poincare\/ upper half-plane, $w= (az+b)/(cz+d)$, where $\left( \begin{array}{ll} a & b \\ c & d \end{array}\right) \in \mathrm{SL}(2,\R)$,  are replaced by automorphisms of $\HH$ of the form
\begin{equation}
\label{eq:siegel-auto}
W = (AZ + B) (CZ+D)^{-1}.
\end{equation}
Here the $n\times n$ real matrices $A,B,C,D$ correspond to elements of the symplectic group $\mathrm{Sp}(n,\mathbb R)$ as follows. They are the elements of block matrices $M$ such that
\begin{equation}
M = \left( \begin{array}{ll} A & B \\ C & D \end{array}\right), \quad M^TJM =J, \quad J = \left( \begin{array}{rr} 0 & I \\ -I & 0 \end{array}\right),
\end{equation}
where $I$ is the $n\times n$ identity matrix.

Siegel characterized automorphisms of $\HH$ that map a given pair of points $(Z,Z_1)$ into a pair $(W,W_1)$ by introducing the matrix-valued cross-ratio 
\begin{align}
\label{eq:siegel3}
    \mathfrak R(Z,Z_1)=(Z-Z_1)(Z-\bar Z_1)^{-1}(\bar Z-\bar Z_1)(\bar Z-Z_1)^{-1}.
\end{align}
The invariants of the group action~\eqref{eq:siegel-auto} are the eigenvalues of $\mathfrak R(Z,Z_1)$: there is an automorphism of $\HH$ mapping $(Z,Z_1)$ into $(W,W_1)$ if and only if $\mathfrak R(Z,Z_1)$ and $\mathfrak R(W,W_1)$ have the same eigenvalues~\cite[Thm. 2]{Siegel}. These eigenvalues always lie within the interval $[0,1]$ (a proof is presented in Section~\ref{sec:th1-proof}).

We construct a stochastic process $Z_t \in \HH$ such that if 
$\lambda_t = (\lambda^1_t,\ldots, \lambda_t^n)$ are the eigenvalues of $\mathfrak{R}(Z_t,iI)$, and $\sigma^k_t>0$ is the positive solution to the implicit equation 
\begin{equation}
\label{eq:siegel-eig1}
    \tanh^2 \frac{\sigma^k_t}{2} = \lambda_t^k, \quad 1\leq k \leq n,
\end{equation}
then $\sigma_t = (\sigma^1_t,\ldots, \sigma_t^n)$ has the same law as the unique strong solution to the \Ito\/ differential equation
\begin{equation}
\label{eq:siegel-eig2} 
\ud \sigma^k_t=\frac12(\coth{\sigma^k_t}+\sum_{l\neq k}\frac{\sinh{\sigma^k_t}}{\cosh{\sigma^k_t}-\cosh{\sigma^l_t}})\ud t+\sqrt{\frac2\beta}\ud W^k_t, \quad k=1,\ldots, n.
\end{equation}
Here $W_t=(W^1_t,\ldots,W^n_t)$ is a standard Wiener process in $\R^n$. 

Equation~\eqref{eq:siegel-eig2} is an analogue of Dyson Brownian motion because they both correspond to stochastic gradient ascent in a  geometric sense. For these reasons, we call the interacting particle system  in equation~\eqref{eq:siegel-eig2} {\em Siegel Brownian motion\/}. 

Let us now introduce geometric stochastic gradient ascent through examples. We then state two theorems that describe this structure for  Siegel Brownian motion. The underlying well-posedness theory parallels that of Dyson Brownian motion. We show in Section~\ref{sec:wellposedness} that equation~\eqref{eq:siegel-eig2} has global strong solutions for $\beta \geq 2$.

\subsection{Motion by curvature and stochastic flows}
Let $z\in \R^n$, let $r=|z|$ and let $P_{z}$ and $P_{z}^\perp$ denote the orthonormal projections onto the tangent and normal space to the circle $S^{n-1}_r$ of radius $r$ at the point $z$. Let $B_t$ be a standard Wiener processes in $\mathbb R^n$ and consider the \Ito\/ SDE 
\begin{align}\label{eq:toy2}
\ud z_t = P_{z_t} \, \ud  B_t + \sqrt{\frac2\beta} P_{z_t}^\perp \,\ud B_t.
\end{align}
This SDE is designed so that the point $z_t$ evolves because of independent tangential and normal stochastic fluctuations with the parameter $\beta>0$ describing the anisotropy of these fluctuations. In the limit $\beta=\infty$, there are only tangential fluctuations and we use \Ito's formula to observe that 
\begin{equation}
\label{eq:toy3}
\ud(|z_t|^2) = \ud(z_t^Tz_t) = \ud z_t^T z_t + z_t^T \ud z_t + \ud z_t^T\ud z_t = (n-1) \, \ud t. 
\end{equation}
It follows that the radius $r_t$ evolves deterministically by 
\begin{equation}
\label{eq:toy3a}
\dot{r}_t= \frac{n-1}{2r_t}.
\end{equation}
Thus, when $\beta=\infty$ each point $z_t$ evolves tangentially and stochastically, but the spheres $S^{n-1}_{r_t}$ evolve deterministically by motion by (minus a half times) curvature.

For arbitrary $\beta \in (0,\infty]$, a similar calculation yields that the radius $r_t$ has the same law as the solution to the SDE
\begin{equation}
\label{eq:toy3b}
    \ud r_t=\frac{n-1}{2r_t} \ud t + \sqrt{\frac2\beta}\ud W_t
    =\frac12\nabla S(r_t)\ud t + \sqrt{\frac2\beta}\ud W_t.
\end{equation}
Here $W_t$ is a standard one-dimensional Wiener process,  and 
\[ S(r)=\log \mathrm{vol} \,(S^{n-1}_{r}),\] 
with $\mathrm{vol}(S^{n-1}(r))$ denoting the $n-1$ dimensional volume of the sphere $S^{n-1}_{r}$, is the Boltzmann entropy. Observe  again that the diffusion of $r_t$ is due to the normal fluctuations driven by $B_t$, whereas the tangential fluctuations in $z_t$ give rise to the {\em normal\/} drift through the \Ito\/ correction.

\subsection{General principles}
This example can be systematized through the use of Riemannian submersion.  Euclidean space $\R^n$ is foliated by concentric circles of radius $r>0$ centered at the origin. The space of radii, $r \in [0,\infty)$, may be identified with the quotient space $\R^n/\mathrm O(n)$ and equipped with a metric via Riemannian submersion. When constructing the SDE~\eqref{eq:toy2}, we introduced stochastic forcing upstairs, with independent horizontal and vertical fluctuations and an anisotropy parameter $\beta>0$.

In a recent paper~\cite{HIM22}, this approach was used to construct Dyson Brownian motion at inverse temperature $\beta \in (0,\infty]$. In this case, the space upstairs is the space of Hermitian matrices equipped with the Frobenius norm, writen $\left(\mathbb{H}(n),\mathrm{Fr}\right)$; the space downstairs is the Weyl chamber of eigenvalues; and the foliation corresponds to isospectral matrices generated by the unitary group~\cite[\S 2.1]{HIM22}. Each matrix $M \in \mathbb{H}(n)$ may be diagonalized, written $M=U\Lambda U^*$, so that it lies on a $U(n)$ orbit $\mathcal{O}_\Lambda \subset \mathrm{Her}(n)$. We again use the projections $P_M$ and $P_M^\perp$ onto the tangent and normal spaces to $T_M\mathcal{O}_\Lambda$ to define the \Ito\/ differential equation
\begin{equation}
\label{eq:ito-rmt}
dM_t = P_{M_t} dX_t + \sqrt{\frac{2}{\beta}} P_{M_t}^\perp dX_t,
\end{equation}
where $X_t$ is the standard Wiener process in $\left(\mathbb{H}(n),\mathrm{Fr}\right)$.

Equation~\eqref{eq:ito-rmt} is a model for Dyson Brownian motion in the following sense. It is shown in~\cite[Thm. 1]{HIM22} that the eigenvalues of $M_t$, written $\lambda_t=(\lambda^1_t,\ldots, \lambda_t^n)$, have the same law as the unique strong solution to the \Ito\/ differential equation
\begin{equation}
\label{eq:ito-rmt2}
d\lambda^k_t = \sum_{j \neq k} \frac{1}{\lambda_k-\lambda_j} \, dt + \sqrt{\frac{2}{\beta}} dW^k_t, \quad 1\leq k \leq n.
\end{equation}
Here $W_t$ denotes the standard Wiener process in $\R^n$ and we assume, given $M_0 \in \mathbb{H}(n)$, that $\lambda_0\in \R^n$ is the spectrum of $M_0$ presented in increasing order.  

Equation~\eqref{eq:ito-rmt2} may also be written as 
\begin{equation}
\label{eq:ito-rmt2c}
d\lambda_t = \frac{1}{2}\nabla S(\lambda) \, dt + \sqrt{\frac{2}{\beta}} dW_t, \quad S(\lambda) = \log \mathrm{vol} (\mathcal{O}_{\lambda}).
\end{equation}
The gradient in $\lambda$ is with respect to the usual Euclidean metric. This is the metric on $\R^n$ obtained by pushing forward the Frobenius metric on $\mathbb{H}(n)$ under the map that sends a matrix to its eigenvalues. This is the natural Riemannian submersion in this problem. Again, in the limit $\beta=\infty$, the group orbits $\mathcal{O}_{\lambda_t}$ evolve deterministically by motion by minus a half times mean curvature.

\subsection{A new matrix model}
Dyson Brownian motion is the fundamental interacting particle system in random matrix theory. Despite its extensive study, the geometric properties revealed by the construction in~\cite{HIM22} have not been noticed before. The simplicity of equation~\eqref{eq:ito-rmt} suggests that other interesting particle systems may be generated through this method. This viewpoint motivated the construction in this paper. 

The use of the Siegel half-space allows us to illustrate our method in a symmetric space with negative curvature. In order to do so, we must replace each term in equation~\eqref{eq:ito-rmt} by a geometric analogue. 

First, the driving process $X_t$ in equation~\eqref{eq:ito-rmt} is replaced by Browian motion, denoted $\bd B_t$,  in $(\mathcal H,g)$. This process may be constructed in several ways, as explained in Section~\ref{sec:background} below; we follow~\cite{Hsu,Ikeda}.  

The analogue of the foliation of $\R^n$ by concentric circles must be provided by a foliation of $\HH$ using a suitable group of isometries (observe that an $O(n)$ action generates the concentric circles). We fix the point $iI_n$ as the origin in $\HH$ so that its stabilizer subgroup $G_{\ii I_n}<\mathrm{Sp}(n,\mathbb R)$ provides the desired group of isometries. Let $\pi:\mathcal H\to \mathcal H/G_{\ii I_n}$ be the canonical projection from $\mathcal H$ to the quotient space. By Siegel's theorem, the moduli space under this group action may be parametrized by the eigenvalues of the cross-ratio $\mathfrak R(Z,\ii I_n)$. Precisely, the quotient space is
\begin{align}
    \mathcal H/G_{\ii I_n}=\{ \lambda \in [0,1)^n | \lambda_1 \leq \lambda_2 \ldots \leq \lambda_n \}\,.
\end{align}
To make the quotient space a manifold it is necessary to avoid singularities at the boundary, and we must ensure the motion remains in
\begin{align}\label{eq:chamber}
    \mathcal Q=\mathrm{Int}(\mathcal H/G_{\ii I_n})=\{ \lambda \in (0,1)^n | \lambda_1 < \lambda_2 \ldots < \lambda_n \}\,.
\end{align}
This space is also equivalent to the quotient of a subset $\tilde{\mathcal H}\subset \mathcal H$ by $G_{\ii I_n}$
\begin{align}
\nonumber
    \tilde{\mathcal H}=&\{\,Z\in\mathcal H\,|\,\mathfrak R(Z,\ii I_n)\text{ has no zero or repeated eigenvalues }\,\}, \;\;
    \mathcal Q=&\tilde{\mathcal H}/G_{\ii I_n}\,.
\end{align}
It is clear that $\mathcal Q$ is a manifold. We equip it with a metric using Riemannian submersion as described below. 


For each $\lambda\in\mathcal Q$, define the subset of $\HH$
\begin{equation}
\label{eq:group-orbit1}
\mathcal O_\lambda= \{Z \in \HH \left|\; \mathrm{eig}\,\mathfrak R(Z,\ii I_n)=\lambda\right. \}.
\end{equation}

Then $\mathcal O_\lambda$ is a smooth manifold immersed in $\mathcal H$ and it is the $G_{\ii I_n}$ group orbit of any $Z$ in it. For each $Z \in \mathcal{O}_\lambda$ we then define the orthogonal projection operators $P_Z,P_Z^{\perp}$
to the tangent space $T_Z\mathcal O_\lambda$ and its orthogonal complement $T_Z\mathcal O_\lambda^{\perp}$ respectively.


Fix $\beta \in (0,\infty]$ and assume given $\bd Z_0$ such that the eigenvalues of $\mathfrak{R}(\bd Z_0, i\,I)$ are distinct and lie in $(0,1)$.  We consider the (formal) \Ito\/ stochastic differential equation 
\begin{align}\label{eq:sdeZ}
    \ud \bd Z_t=P_{Z_t}(\ud \bd B_t)+\sqrt{\frac2\beta}P^{\perp}_{Z_t}(\ud \bd B_t), \quad \bd Z(0) = \bd{Z}_0.
\end{align}
This expression is only formal because stochastic differential equations in $\HH$ must be defined using the Stratonovich rule to ensure coordinate invariance. The precise definition of equation~\eqref{eq:sdeZ}, as well as its well-posedness theory, is presented in Section~\ref{sec3} and Section~\ref{sec:wellposedness}. We define the stopping time $\tau_\beta$ as the first time at which eigenvalues to $\mathfrak R(Z_t,\ii I)$ collide. In particular, $\tau_\beta=+\infty$ for $\beta\geq 2$. We then have the following
\begin{theorem}\label{thm1}
    Let $\bd Z_t$ be the unique strong solution to the SDE~\eqref{eq:sdeZ}, let $\lambda_t = \mathrm{eig}\left(\mathfrak R(\bd Z_t,\ii I_n\right))$ and define $\sigma_t$ using equation~\eqref{eq:siegel-eig1}. Then $\sigma_t$ has the same law as the unique strong solution to the SDE~\qref{eq:siegel-eig2} on the maximal interval of existence $[0,\tau_\beta)$.
\end{theorem}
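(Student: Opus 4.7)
The plan is to adapt the strategy of~\cite{HIM22} to this symmetric-space setting, using that the cross-ratio eigenvalues parametrize the quotient $\tilde\HH/G_{\ii I_n}$ and that the quotient map $\pi\colon \tilde\HH\to\mathcal Q$ becomes, after the change of variable~\eqref{eq:siegel-eig1}, a Riemannian submersion onto an open subset of Euclidean $\R^n$. Concretely, I would first identify a Cartan subspace $\mathfrak a\subset T_{\ii I_n}\HH$ whose exponential image is the totally geodesic flat $F=\{\ii\,\mathrm{diag}(e^{\sigma^1},\ldots,e^{\sigma^n}):\sigma\in\R^n\}$ and which meets every $G_{\ii I_n}$-orbit in $\tilde\HH$. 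A short direct computation from~\eqref{eq:siegel3} shows that the eigenvalues of $\mathfrak R(\ii\,\mathrm{diag}(e^{\sigma^k}),\ii I_n)$ are $\tanh^2(\sigma^k/2)$, so $\pi|_F$ is the bijection $\ii\,\mathrm{diag}(e^{\sigma^k})\mapsto\sigma$; restricting~\eqref{eq:siegel2} to $F$ gives $g_\HH|_F=\sum_k(\ud\sigma^k)^2$, so $\pi$ is an isometry of $F$ onto $(\R^n,g_{\mathrm{eucl}})$ in the $\sigma$-coordinates, and the downstairs gradient in the drift can be interpreted as the plain Euclidean gradient in $\sigma$.

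With this structure in hand, I would apply \Ito's formula to $\sigma(\bd Z_t)$. Because $\sigma$ is $G_{\ii I_n}$-invariant, the horizontal piece $P_{Z_t}\ud\bd B_t$ contributes nothing to $\ud\sigma$ at first order but leaves a second-order \Ito\/ correction. By the standard computation for a Riemannian submersion with group-orbit fibers, this correction equals minus the downstairs mean-curvature vector of $\mathcal O_{\lambda_t}$, which is $\tfrac12\nabla\log\vol(\mathcal O_\lambda)$ -- the same mechanism that generates the drift in the toy problem~\eqref{eq:toy3b} and in Dyson Brownian motion, cf.~\eqref{eq:ito-rmt2c}. The vertical piece $P^\perp_{Z_t}\ud\bd B_t$ pushes forward under $\ud\pi$ to $\sqrt{2/\beta}$ times a standard Wiener process in $\R^n$, by the flat-isometry established above. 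Combining these contributions yields
\begin{equation*}
\ud\sigma_t=\tfrac12\nabla S(\sigma_t)\,\ud t+\sqrt{\tfrac2\beta}\,\ud W_t,\qquad S(\sigma)=\log\vol\mathcal O_{\lambda(\sigma)},
\end{equation*}
valid on $[0,\tau_\beta)$ where the orbit map remains smooth.

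To close the loop it remains to compute $\nabla S$ explicitly. The orbit $\mathcal O_\lambda$ is a $G_{\ii I_n}$-orbit of a point on $F$, so by the Cartan-decomposition Jacobian for the noncompact symmetric pair $\bigl(\mathrm{Sp}(n,\R),U(n)\bigr)$, whose restricted root system is of type $C_n$ with all multiplicities equal to one,
\begin{equation*}
\vol(\mathcal O_\lambda)=c_n\prod_{k=1}^n\sinh\sigma^k\prod_{1\le k<l\le n}\sinh\!\tfrac{\sigma^k+\sigma^l}{2}\,\sinh\!\tfrac{\sigma^k-\sigma^l}{2},
\end{equation*}
up to a dimensional constant $c_n$. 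Applying the identity $\coth\!\tfrac{\sigma^m+\sigma^l}{2}+\coth\!\tfrac{\sigma^m-\sigma^l}{2}=2\sinh\sigma^m/(\cosh\sigma^m-\cosh\sigma^l)$ recovers $\partial_{\sigma^m}S=\coth\sigma^m+\sum_{l\ne m}\sinh\sigma^m/(\cosh\sigma^m-\cosh\sigma^l)$, matching the drift in~\eqref{eq:siegel-eig2}. The principal obstacle I anticipate is not this algebra but the transcription of the O'Neill-type \Ito\/ correction into the Stratonovich-based definition of $\bd B_t$ to be given in Section~\ref{sec3}: one must justify carefully that the horizontal stochastic forcing contributes exactly the mean-curvature drift downstairs, working on $\tilde\HH$ up to $\tau_\beta$. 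Once this is in place -- as in~\cite{HIM22} for Dyson Brownian motion -- the rest of the argument is a direct calculation in the $C_n$ root system.
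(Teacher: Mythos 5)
Your proposal is correct in substance, but it follows a genuinely different route from the paper. The paper's proof of Theorem~\ref{thm1} never touches the orbit volume: it passes to the disk model $\DD$ by the Cayley transform, uses the Takagi factorization $R=QMQ^T$ (Lemma~\ref{lem:takagi}) to build the explicit orthonormal frame $\{\bd L_k\}\cup\{\bd U_i\}$ of Theorem~\ref{uvectors}, computes $\nabla_{\bd U_i}\bd U_i$ and $\nabla_{\bd L_k}\bd L_k$ by Koszul's formula (Lemma~\ref{lemCoD}), and then reads the drift of $\sigma_t$ directly off the frame form \eqref{eq:proj1}--\eqref{eq:proj2} of the SDE as the sum of the Stratonovich-to-\Ito\ corrections; the identification of that drift with $\tfrac12\nabla S$ is deferred to Theorem~\ref{thm2}. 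You instead stay in the half-space model, use the totally geodesic flat $F=\{\ii\,\mathrm{diag}(e^{\sigma^k})\}$ as a section for the $G_{\ii I_n}$-action, invoke the general principle that for orbit foliations by isometries the \Ito\ correction produced by the orbit-tangential noise is $-\tfrac12 H$ with $-H=\nabla\log\mathrm{vol}(\mathcal O_\lambda)$ (the Pacini identity the paper quotes only in a remark), and then evaluate $\mathrm{vol}(\mathcal O_\lambda)$ from the Cartan integration formula for the restricted root system $C_n$ of $\mathrm{Sp}(n,\R)/U(n)$ with all multiplicities one --- in effect proving Theorem~\ref{thm2} first and deducing the drift of \eqref{eq:siegel-eig2} from it. Your route is shorter and conceptually transparent (the $C_n$ density and the half-angle identity do reproduce the drift exactly), while the paper's route is self-contained matrix algebra that simultaneously establishes the facts your sketch consumes as black boxes.

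Three points need tightening, though none is fatal. First, you implicitly need that the flat $F$ meets every orbit \emph{orthogonally}, so that at points of $F$ the normal space $T_Z\mathcal O_\lambda^\perp$ coincides with $T_ZF$; this hyperpolarity of the isotropy action is exactly what the paper verifies by hand in Theorem~\ref{uvectors}, and without it neither the Euclidean form of the quotient metric nor the claim that $P^\perp_{Z_t}\ud\bd B_t$ pushes forward to $\sqrt{2/\beta}$ times a standard Wiener process is justified; you also need $\nabla_{\bd L_k}\bd L_k=0$ so the normal noise contributes no extra drift, which your total-geodesy observation does supply. Second, a factor slip in the prose: minus the mean curvature is $\nabla\log\mathrm{vol}$, and the \Ito\ correction is one half of that; your displayed equation is nevertheless correct. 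Third, your horizontal/vertical labels are swapped relative to the standard submersion convention (the orbit directions are vertical). With the orthogonality of the section stated, and the projected SDE interpreted through \eqref{eq:proj1}--\eqref{eq:proj2} as in Section~\ref{sec3} (which settles the Stratonovich bookkeeping you flag as the main obstacle), your argument goes through on $[0,\tau_\beta)$.
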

Define the Boltzmann entropy of the group orbit $\mathcal O_{\lambda}$ by the formula 
   \begin{align}
   \label{eq:entropy}
        S(\lambda) = \log\mathrm{vol}(\mathcal O_{\lambda}),
    \end{align}
where the volume form is obtained from the metric $g$ on $\HH$. We changed variables from $\lambda$ to $\sigma$ in equation~\eqref{eq:siegel-eig1} in order that the metric in $\sigma$-space is the standard Euclidean metric on $\R^n$. Thus, $\nabla S(\sigma)$ in equation~\eqref{eq:evol-sigma} below denotes the usual Euclidean gradient of the entropy $S(\sigma)$. 

\begin{theorem}\label{thm2} There is a universal constant $c_n>0$ such that 
\begin{equation}
    \label{eq:entropy-formula}
    S(\sigma) =\sum_{k=1}^n\log\sinh{\sigma^k}+\sum_{1\leq k<l\leq n}\log|\cosh{\sigma^k}-\cosh{\sigma^l}|+c_n.
\end{equation}
Equation \eqref{eq:siegel-eig2} is equivalent to stochastic gradient ascent 
of Boltzmann entropy 
    \begin{align}
    \label{eq:evol-sigma}
        \ud \sigma_t= \frac{1}{2}\nabla S(\sigma_t)\ud t +\sqrt{\frac2\beta}\ud W_t.
    \end{align}    
\end{theorem}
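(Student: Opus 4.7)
The plan is to recognize $\HH$ as the Riemannian symmetric space $\mathrm{Sp}(n,\R)/U(n)$ and to compute $\mathrm{vol}(\mathcal{O}_\sigma)$ from the Harish-Chandra (Weyl) integration formula on it. The stabilizer $G_{\ii I_n}$ is precisely the maximal compact subgroup $K\cong U(n)$, and the orbits $\mathcal{O}_\lambda$ are $K$-orbits through representatives obtained by exponentiating a Cartan subspace $\mathfrak{a}\subset\mathfrak{p}$. Once the restricted root system is identified and the slice coordinate $\sigma$ is tied down to the Euclidean coordinate on $\mathcal{Q}$ coming from the submersion, both \eqref{eq:entropy-formula} and the reformulation \eqref{eq:evol-sigma} follow by routine manipulation.

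Carrying this out, I would first choose the Cartan subspace $\mathfrak{a}=\{\mathrm{diag}(A,-A):A=\mathrm{diag}(a_1,\dots,a_n)\}\subset\mathfrak{p}$. Exponentiating and applying to the basepoint gives $\exp(H)\cdot \ii I_n=\ii\,\mathrm{diag}(e^{2a_1},\dots,e^{2a_n})$, and a direct computation from \eqref{eq:siegel3} shows $\mathfrak R(\ii D,\ii I_n)=\bigl[(D-I)(D+I)^{-1}\bigr]^2$, so on setting $\sigma^k=2a_k$ (equivalently $d_k=e^{\sigma^k}$) one recovers exactly $\lambda^k=\tanh^2(\sigma^k/2)$, matching \eqref{eq:siegel-eig1}. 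Pulling back the Siegel metric \eqref{eq:siegel2} to this slice gives $g|_{\text{slice}}=\sum_k(d\sigma^k)^2$, so $\sigma$ is the Euclidean normal coordinate on $\mathcal{Q}$ induced by the Riemannian submersion $\pi:\tilde{\HH}\to\mathcal{Q}$, and $\nabla S$ in \eqref{eq:evol-sigma} is the ordinary Euclidean gradient.

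Second, a block computation of $\mathrm{ad}_H$ on $\mathfrak{sp}(n,\R)$ shows that the positive restricted roots are $a_i-a_j$ and $a_i+a_j$ for $i<j$ together with $2a_i$, each with multiplicity one: the root system of type $C_n$. The Harish-Chandra integration formula, combined with the coarea identity $\int_\HH f\,dV_\HH=\int_\mathcal{Q}f(\sigma)\,\mathrm{vol}(\mathcal{O}_\sigma)\,d\sigma$ (valid for $K$-invariant $f$), then gives
\[ \mathrm{vol}(\mathcal{O}_\sigma)\propto \prod_i\sinh(2a_i)\prod_{i<j}\bigl|\sinh(a_i-a_j)\sinh(a_i+a_j)\bigr|. \]
Applying $\sinh(2a_k)=\sinh\sigma^k$ together with $\sinh\tfrac{\sigma^k+\sigma^l}{2}\sinh\tfrac{\sigma^k-\sigma^l}{2}=\tfrac12(\cosh\sigma^k-\cosh\sigma^l)$ collapses this product, up to a multiplicative constant absorbed into $c_n$, into \eqref{eq:entropy-formula}. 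Termwise differentiation then yields $\partial_{\sigma^k}S=\coth\sigma^k+\sum_{l\neq k}\sinh\sigma^k/(\cosh\sigma^k-\cosh\sigma^l)$, whose half is exactly the drift of \eqref{eq:siegel-eig2}; substituting this recasts \eqref{eq:siegel-eig2} as the stochastic gradient ascent \eqref{eq:evol-sigma}.

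The main obstacle is bookkeeping rather than any deep difficulty. The Cartan parameter $a_k$ and the slice coordinate $\sigma^k$ differ by a factor of two, and this same factor converts $\sinh(2a_k)$ into $\sinh\sigma^k$ and $\sinh(a_k\pm a_l)$ into $\sinh((\sigma^k\pm\sigma^l)/2)$, which combine into the differences $\cosh\sigma^k-\cosh\sigma^l$ only via the product-to-sum identity above; getting the normalizations right is the only place where one can lose the precise form of \eqref{eq:entropy-formula}. A secondary verification is that the generic stabilizer of $\ii D$ in $K$ is finite, so that the $K$-orbit has the expected dimension, and that $\mathrm{vol}(K)$, the Weyl group factor, and powers of $2$ all get absorbed into the universal constant $c_n$ without affecting $\nabla S$.
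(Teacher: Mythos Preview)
Your argument is correct and takes a genuinely different route from the paper. The paper works entirely in the bounded domain model $\DD=\varphi(\HH)$: it uses the Takagi decomposition $R=QMQ^T$ to write down an explicit orthonormal frame $\{\bd L_k,\bd U_i\}$ for $T\HH$, reads off the induced metric $g_{\mathcal{O}_\lambda}$ in the dual coframe $\{\omega^i\}$, and then argues directly that $\sqrt{\det M_{\lambda,\omega}}$ factors out of the volume integral while $\int_{\mathcal{O}_\lambda}\bigwedge_i\omega^i$ is shown to be $\lambda$-independent via an explicit diffeomorphism $\phi:\mathcal{O}_\lambda\to\mathcal{O}_{\lambda'}$ that preserves the $Q$-coordinate. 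The Euclidean nature of the $\sigma$-coordinates on $\mathcal{Q}$ is then checked separately from $\bd L_k(\sigma^l)=\delta_{kl}$.

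Your approach instead invokes the symmetric-space structure $\HH\cong\mathrm{Sp}(n,\R)/U(n)$, identifies the restricted root system as type $C_n$ with all multiplicities one, and reads the orbit volume directly from the Harish-Chandra integration formula $dV_\HH\propto\prod_{\alpha>0}|\sinh\alpha(H)|\,d\sigma\,dk$. The bookkeeping you flag, namely $\sigma^k=2a_k$ and the product-to-sum identity, is exactly what collapses the root product into the form~\eqref{eq:entropy-formula}. What you gain is brevity and immediate generalization to other noncompact symmetric spaces; what the paper gains is a self-contained computation that requires no Lie-theoretic prerequisites beyond Siegel's original paper, which is a deliberate choice the authors make explicit in their remarks. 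Both approaches land on the same formula, and the final differentiation step identifying the drift of~\eqref{eq:siegel-eig2} with $\tfrac12\nabla S$ is identical.
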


\begin{remark}
We have adopted the Boltzmann sign convention for entropy: it is chosen so that our systems evolve to maximize entropy.  Conceptually this corresponds to maximizing disorder in the gauge group in each of our examples. 
\end{remark}

\begin{remark}
We use the convention in geometric analysis to define the mean curvature of an immersion $u: \mathcal{N} \to (\mathcal{M},g)$ of a smooth manifold $\mathcal{N}$ into a Riemannian manifold $(\mathcal{M},g)$): it is the trace of the second fundamental form of $u$ with respect to the metric $g$. As a guide, it is helpful to note that the mean curvature of a sphere $S_r^{n-1}$ points inwards and it has magnitude $(n-1)/r$.    
\end{remark}

\begin{remark}
The mean curvature is usually obtained as the first variation of a volume functional. However, for Riemannian submersion of group orbits generated by isometries, the mean curvature is also the negative of the gradient of the entropy (evaluated upstairs)~\cite[p.3350]{Pacini}. This is why in all the examples above the group orbits move by motion by minus a half curvature in the limit $\beta=\infty$.    
\end{remark}

\begin{remark}
It is well known that Dyson Brownian motion may be generalized using representation theory to the Dunkl process and radial Heckman-Opdam process (see~\cite[\S 2]{HM} for a comprehensive treatment of these processes, including a history and explicit examples). It is natural to expect a relationship between these processes and Siegel Brownian motion (compare for example equation~\eqref{eq:siegel-eig2} and~\cite[Eq. 2.35]{HM}). However, we have been unable to determine such a link. Our approach makes no use of representation theory beyond what is implicit in Siegel's classical work on $\HH$; instead our tools are a combination of standard stochastic differential geometry and facts from~\cite{Siegel}.
\end{remark}


The following section provides background material about SDE on manifolds. This is followed by the proofs of Theorem~\ref{thm1} and Theorem~\ref{thm2}. The well-posedness theory is described in Section~\ref{sec:wellposedness}.

\section{Background: SDE and martingales on manifolds}
\label{sec:background}
\subsection{Outline}
In this section, we assume given a smooth manifold $\mathcal{M}$ (a differentiable manifold with a $C^\infty$ structure) and we review some facts about SDE and martingales on manifolds. The main objective is to explain the definition of equation~\eqref{eq:sdeZ}. Our primary sources for stochastic differential geometry are~\cite{Hsu,Ikeda}.

\begin{notation} We use bold font to denote $\mathcal{M}$-valued processes and vector fields on $\mathcal{M}$.  Coordinates of these processes are written in the standard font.
For example, let $\varphi=(\varphi^1,...,\varphi^d)$ be local coordinates on an open set $U\subset\mathcal M$. The $i$-th coordinate of an $\mathcal M$-valued process $\bd X_t$ is denoted by $X^i_t=\varphi^i(\bd X_t)$. A vector field $\bd V$ on $\mathcal M$ can be described in coordinates as $\bd V(X)=V^i(X)\partial_i=V^i(X)\frac{\partial}{\partial \varphi^i}$, $X\in U$. For a function $f\in C^{\infty}(\mathcal M)$, $\bd V(f)=\bd V(\ud f)=\ud f(\bd V)$ gives the directional derivative of $f$ along $\bd V$.
\end{notation}

\subsection{SDE, martingales on manifolds}\label{sec3}
The Stratonovich formulation is used to define SDE on manifolds. A typical SDE on $\mathcal{M}$ is as follows. Assume given $l$ vector fields  $\{\bd V_1,...,\bd V_l\}$ on $\mathcal M$, an $\mathbb R^l$-valued driving semimartingale $Z$, and write 
   \begin{align}
   \label{eq:sde-manifold1}
        \ud \bd X_t=\sum_{\alpha=1}^l\bd V_{\alpha}(\bd X_t)\circ\ud Z^{\alpha}_t.
    \end{align}
The use of the Stratonovich formulation ensures that the usual chain rule applies, ensuring coordinate invariance of the solution to the SDE. The solution theory uses semimartingales in the following way.  

\begin{definition}
    Let $\mathcal M$ be a smooth manifold and $(\Omega,\mathcal F_*,\mathbb P)$ a filtered probability space. Let $\tau$ be an $\mathcal F_*$-stopping time. A continuous $\mathcal M$-valued process $\bd X_t$ defined on $[0,\tau)$ is called an $\mathcal M$-valued semimartingale if $f(\bd X_t)$ is a real valued semimartingale on $[0,\tau)$ for all $f\in C^{\infty}(\mathcal M)$.
\end{definition}
\begin{definition}
    An $\mathcal M$-valued semimartingale $\bd X$ defined up to a stopping time $\tau$ is a solution to the SDE ~\eqref{eq:sde-manifold1} if for all $f\in C^{\infty}(\mathcal M)$, 
    \begin{align}\label{sde1}
        f(\bd X_t)=f(\bd X_0)+\sum_{\alpha=1}^l\int_0^t\bd V_{\alpha}f(\bd X_t)\circ \ud Z^{\alpha}_s, \quad0\leq t<\tau.
    \end{align}
\end{definition}
These two definitions are intrinsic. Only the differentiable structure on $\mathcal{M}$ has been used to define the SDE~\eqref{eq:sde-manifold1}, since we have only used the concept of vector fields. However, in order to define martingales taking values in $\mathcal{M}$, the manifold must be equipped with a connection $\nabla$. For Riemannian manifolds, we always use the Levi-Civita connection.
\begin{definition}
    Assume $\mathcal M$ is a smooth manifold equipped with a connection $\nabla$. An $\mathcal M$-valued semimartingale $\bd X$ is called a $\nabla$-martingale if and only if the following process is a $\mathbb R$-valued local martingale for all $f\in C^{\infty}(\mathcal M)$:
    \begin{align}\label{mtg1}
        N^f(\bd X)_t:=f(\bd X_t)-f(\bd X_0)-\frac12\int_0^t\nabla^2f(\ud \bd X_s,\ud \bd X_s),
    \end{align}
    where $\nabla^2f$ is the Hessian of $f$ under $\nabla$.
\end{definition}

In our model, the driving semimartingales are Wiener processes or just $t$. The SDE we are concerned with have the following form:
\begin{align}\label{sde2}
    \ud \bd X_t=\sum_{\alpha=1}^l\Big(\bd V_{\alpha}(\bd X_t)\circ\ud W^{\alpha}_t-\frac12\nabla_{\bd V_{\alpha}}\bd V_{\alpha}(\bd X_t)\ud t\Big)
\end{align}
where $(W^1_t,...,W^l_t)$ are independent Wiener processes, and $\nabla_{\bd V_{\alpha}}\bd V_{\alpha}$ is the covariant derivative of $\bd V_{\alpha}$ along itself. The deterministic correction is introduced in equation~\eqref{sde2} in order that we could formally regard equation~\eqref{sde2} as the \Ito\/ SDE
\begin{equation}
\label{sde3}
\ud \bd X_t = \sum_{\alpha=1}^l\bd V_{\alpha}(\bd X_t)\, \ud W^{\alpha}_t.    
\end{equation} 
More precisely, we have 
\begin{proposition}\label{prop}
    The solution to the SDE~\eqref{sde2} is a $\nabla$-martingale.
\end{proposition}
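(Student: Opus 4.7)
The plan is to apply the test function $f$ to both sides of the SDE~\eqref{sde2}, convert the Stratonovich integrals to Itô form, and verify that the resulting drift matches exactly the Hessian correction in the definition of $N^f(\bd X)_t$, leaving only a stochastic integral against the driving Wiener processes.

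First, I would use the defining property~\eqref{sde1} of a solution to the Stratonovich SDE. Since equation~\eqref{sde2} has the form of~\eqref{sde-manifold1} with driving semimartingales $W^\alpha_t$ and $t$, I get
\begin{equation*}
f(\bd X_t) - f(\bd X_0) = \sum_{\alpha=1}^l \int_0^t \bd V_\alpha f(\bd X_s) \circ \ud W^\alpha_s - \frac{1}{2} \sum_{\alpha=1}^l \int_0^t (\nabla_{\bd V_\alpha} \bd V_\alpha) f (\bd X_s)\, \ud s.
\end{equation*}
Working in a local chart (and localizing with exit times from coordinate neighborhoods) I would convert each Stratonovich integral into Itô form. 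The standard rule gives
\begin{equation*}
\int_0^t \bd V_\alpha f(\bd X_s) \circ \ud W^\alpha_s = \int_0^t \bd V_\alpha f(\bd X_s)\, \ud W^\alpha_s + \frac{1}{2} \int_0^t \bd V_\alpha(\bd V_\alpha f)(\bd X_s)\, \ud s,
\end{equation*}
where the cross-variation $\langle \bd V_\alpha f(\bd X), W^\alpha \rangle$ is computed by extracting the martingale part of $\ud(\bd V_\alpha f)(\bd X_s)$, which in coordinates is $\sum_\beta \bd V_\beta(\bd V_\alpha f)\, \ud W^\beta$, and using independence of the $W^\beta$'s.

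The key algebraic step is then to combine the two drift contributions. By the defining identity of the Hessian for the Levi-Civita connection,
\begin{equation*}
\nabla^2 f(\bd V_\alpha, \bd V_\alpha) = \bd V_\alpha(\bd V_\alpha f) - (\nabla_{\bd V_\alpha} \bd V_\alpha) f,
\end{equation*}
so the two $\frac{1}{2}(\cdot)\ud s$ terms collapse to $\frac{1}{2} \sum_\alpha \nabla^2 f(\bd V_\alpha, \bd V_\alpha)(\bd X_s)\,\ud s$. Separately, the quadratic variation of $\bd X_t$ in local coordinates is $\ud [X^i, X^j]_s = \sum_\alpha V^i_\alpha V^j_\alpha (\bd X_s)\, \ud s$, since the Stratonovich corrections and the drift term in~\eqref{sde2} contribute only to the finite variation of $\bd X$. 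Therefore
\begin{equation*}
\int_0^t \nabla^2 f(\ud \bd X_s, \ud \bd X_s) = \sum_{\alpha=1}^l \int_0^t \nabla^2 f(\bd V_\alpha, \bd V_\alpha)(\bd X_s)\, \ud s,
\end{equation*}
and subtracting this from $f(\bd X_t) - f(\bd X_0)$ as in~\eqref{mtg1} leaves $N^f(\bd X)_t = \sum_\alpha \int_0^t \bd V_\alpha f(\bd X_s)\, \ud W^\alpha_s$, which is manifestly a local martingale.

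I do not anticipate a genuine obstacle here; the proposition is essentially a bookkeeping exercise. The only point that requires care is that the whole argument is coordinate-dependent (Itô calculus is not intrinsic), so one must localize to charts and invoke a partition-of-unity style patching, together with the fact that the two intrinsic objects one ends up comparing ($\nabla^2 f(V,V)$ and the drift $\bd V(\bd V f) - (\nabla_{\bd V}\bd V)f$) are known to agree globally by the definition of the Hessian. After that, checking that $N^f$ is a local martingale is immediate since it equals a stochastic integral against Brownian motion.
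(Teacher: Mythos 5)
Your proof is correct and follows essentially the same route as the paper: both apply the defining property of the Stratonovich solution to $f$, identify the quadratic variation $\nabla^2 f(\ud\bd X,\ud\bd X)$ with $\sum_\alpha \nabla^2 f(\bd V_\alpha,\bd V_\alpha)\,\ud s$, and use the identity $\bd V_\alpha(\bd V_\alpha f)=(\nabla_{\bd V_\alpha}\bd V_\alpha)f+\nabla^2 f(\bd V_\alpha,\bd V_\alpha)$ so that the drift correction cancels and $N^f(\bd X)_t$ reduces to the It\^o integral $\sum_\alpha\int_0^t \bd V_\alpha f(\bd X_s)\,\ud W^\alpha_s$. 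The only difference is presentational: you carry out the Stratonovich-to-It\^o conversion and the localization explicitly at the start, whereas the paper performs the same cancellation first and absorbs the conversion into the final step.
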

\begin{proof}
    Inserting equation \eqref{sde1} into equation \eqref{mtg1}, we only need to show that the following process is a local martingale~\cite[Prop 2.5.2,pp. 56]{Hsu} :
    \begin{align}
        N^f(\bd X)_t=\sum_{\alpha=1}^l\Big(\int_0^t\Big(\bd V_{\alpha}f(\bd X_t)\circ \ud W^{\alpha}_s-\frac12(\nabla_{\bd V_{\alpha}}\bd V_{\alpha})f(\bd X_t)\Big)\ud t-\frac12\int_0^t\nabla^2f(\bd V_{\alpha},\bd V_{\alpha})(\bd X_t)\ud t\Big).
    \end{align}

    But since $\bd V(\bd Vf)=(\nabla_{\bd V}\bd V)f+\nabla_{\bd V}(\ud f)(\bd V)=(\nabla_{\bd V}\bd V)f+\nabla^2f(\bd V,\bd V)$, we combine the last two terms to obtain 
    \begin{align}
        N^f(\bd X)_t&=\sum_{\alpha=1}^l\Big(\int_0^t\bd V_{\alpha}f(\bd X_t)\circ\ud W^{\alpha}_s-\frac12\int_0^t\bd V_{\alpha}(\bd V_{\alpha}f)(\bd X_t)\ud t\Big)\nonumber\\
        &=\sum_{\alpha=1}^l\int_0^t\bd V_{\alpha}f(\bd X_t)\ud W^{\alpha}_t.
    \end{align}
\end{proof}

\subsection{Brownian motion and orthogonal projections}
When the manifold is Riemannian we may characterize Brownian motion in several ways. Orthonormal bases provide a direct approach.
\begin{corollary}\label{corBM}
Let $\{\bd V_\alpha\}_{\alpha=1}^n$ be an orthonormal basis for $T\mathcal M$. Then the solution to the SDE 
\begin{align}\label{sdeBM}
    \ud \bd X_t=\sum_{\alpha=1}^n(\bd V_{\alpha}\circ\ud W^{\alpha}_t-\frac12\nabla_{\bd V_{\alpha}}\bd V_{\alpha}\ud t)
\end{align}
has generator $\Delta=\Tr(\nabla^2)$, so that it is a Brownian motion on $\mathcal M$.
\end{corollary}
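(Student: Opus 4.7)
The plan is to apply Proposition~\ref{prop} directly, which already tells us that $\bd X_t$ is a $\nabla$-martingale, and then evaluate the Hessian term appearing in equation~\eqref{mtg1} for this specific choice of driving fields. The only real content is identifying the quadratic variation with the Laplacian via orthonormality.

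First I would unpack the martingale condition. Since $\bd X_t$ solves~\eqref{sde2} with $l=n$ and an orthonormal frame, Proposition~\ref{prop} yields that for every $f\in C^\infty(\mathcal M)$ the process
\begin{equation}
N^f(\bd X)_t=f(\bd X_t)-f(\bd X_0)-\frac12\int_0^t\nabla^2 f(\ud \bd X_s,\ud \bd X_s) \nn
\end{equation}
is a local martingale. So everything reduces to computing the integrand $\nabla^2 f(\ud \bd X_s,\ud \bd X_s)$.

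Next I would compute the quadratic variation. Writing~\eqref{sde2} in local coordinates $\varphi^i$, the finite-variation drift $-\frac12\sum_\alpha \nabla_{\bd V_\alpha}\bd V_\alpha\,\ud t$ contributes nothing to the quadratic variation, so the Stratonovich part gives
\begin{equation}
\ud X^i_s \, \ud X^j_s = \sum_{\alpha,\beta=1}^n V^i_\alpha(\bd X_s)V^j_\beta(\bd X_s)\,\ud\langle W^\alpha,W^\beta\rangle_s = \sum_{\alpha=1}^n V^i_\alpha V^j_\alpha(\bd X_s)\,\ud s, \nn
\end{equation}
using independence of the driving Wiener processes. Plugging into the bilinear form $\nabla^2 f$ yields
\begin{equation}
\nabla^2 f(\ud \bd X_s,\ud \bd X_s)=\sum_{\alpha=1}^n \nabla^2 f(\bd V_\alpha,\bd V_\alpha)(\bd X_s)\,\ud s. \nn
\end{equation}
Because $\{\bd V_\alpha\}$ is an orthonormal frame for $T\mathcal M$, the sum on the right is exactly the metric trace $\Tr(\nabla^2 f)=\Delta f$ of the Hessian, which is the Laplace--Beltrami operator.

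Substituting back, we obtain that
\begin{equation}
f(\bd X_t)-f(\bd X_0)-\frac12\int_0^t \Delta f(\bd X_s)\,\ud s \nn
\end{equation}
is a local martingale for every $f\in C^\infty(\mathcal M)$. By the martingale characterization of Brownian motion on a Riemannian manifold (see~\cite{Hsu,Ikeda}), this is precisely what it means for $\bd X_t$ to be Brownian motion on $(\mathcal M,g)$ with infinitesimal generator $\tfrac12\Delta$. The only subtlety worth flagging is the already-handled Stratonovich-to-It\^o conversion: the covariant drift $-\tfrac12\nabla_{\bd V_\alpha}\bd V_\alpha$ in~\eqref{sde2} is exactly what Proposition~\ref{prop} absorbs into the Hessian correction, so no further bookkeeping is required. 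There is no real obstacle; the argument is purely a matter of matching the trace identity $\sum_\alpha \nabla^2 f(\bd V_\alpha,\bd V_\alpha)=\Delta f$ to the quadratic variation produced by an orthonormal frame.
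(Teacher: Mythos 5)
Your proposal is correct and follows essentially the same route as the paper: the paper's own proof likewise combines the deterministic correction $-\tfrac12\nabla_{\bd V_\alpha}\bd V_\alpha$ with the Stratonovich--It\^o correction to produce $\tfrac12\nabla^2 f(\bd V_\alpha,\bd V_\alpha)$ (drift modulo a local martingale) and then uses orthonormality to identify $\sum_\alpha \nabla^2 f(\bd V_\alpha,\bd V_\alpha)=\Delta f$. The only cosmetic difference is that you invoke Proposition~\ref{prop} and the quadratic variation explicitly, whereas the paper redoes that computation inline; the content is identical.
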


\begin{proof}
    For any $f\in C^\infty(\mathcal M)$, we have 
    \begin{align}
        f(\bd X_t)-f(\bd X_0)=&\sum_{\alpha=1}^n\int_0^t\Big(\bd V_\alpha f(\bd X_s)\circ\ud W^\alpha_s-\frac12 \nabla_{\bd V_\alpha}\bd V_\alpha f(\bd X_s)\ud s\Big)\\
        =&\sum_{\alpha=1}^n\int_0^t\frac12\nabla^2f(\bd V_\alpha,\bd V_\alpha)(\bd X_s)\ud s\nonumber\\
        =&\int_0^t\frac12\Delta f(\bd X_s)\ud s\nonumber\,.
    \end{align}
\end{proof}

At this stage we are able to define the projection operator $P$ onto a submanifold $\mathcal O\subset\mathcal M$ clearly. For drift terms in an SDE, no new ideas are required: we simply project the drift vector to $T\mathcal O$. For diffusion term that is an $\nabla$-martingale in an SDE, e.g. with a single driving vector field,
\begin{align}\label{mtgSde}
    \ud \bd X_t=\bd V\circ\ud W_t-\frac12\nabla_{\bd V}\bd V\ud t,
\end{align}
we define 
\begin{align}\label{proj-sde}
    P(\ud \bd X_t)=P(\bd V)\circ \ud W_t-\frac12\nabla_{P(\bd V)}P(\bd V)\ud t\, . 
\end{align}

To understand this projection, we have the following Lemma:
\begin{lemma}
    Let $f\in C^\infty(\mathcal M)$, $\bd X_t$ solves SDE \eqref{mtgSde} and is a $\nabla$-martingale. By \eqref{mtg1}, we could write
    \begin{align}
        \ud f(\bd X_t)=\frac12\nabla^2f(\bd V,\bd V)\ud t+\text{ some local martingale}\,.
    \end{align}

    Let $\bd X^p_t$ be the solution to the SDE:
    \begin{align}
        \ud \bd X^p_t=P(\bd V)\circ \ud W_t-\frac12\nabla_{P(\bd V}P(\bd V)\ud t\, . 
    \end{align}
    then we have
    \begin{align}
        \ud f(\bd X^p_t)=\frac12\nabla^2f(P(\bd V),P(\bd V))\ud t+\text{ some local martingale}\,.
    \end{align}
\end{lemma}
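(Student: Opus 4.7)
The plan is to observe that this lemma is essentially a corollary of the preceding Proposition: the SDE defining $\bd X^p_t$ has the same structural form as equation~\eqref{sde2}, with a single driving vector field $P(\bd V)$ in place of $\bd V$ and with the covariant self-correction $-\frac12 \nabla_{P(\bd V)} P(\bd V)\,\ud t$ built in. Consequently, applying Proposition~\ref{prop} directly, $\bd X^p_t$ is itself a $\nabla$-martingale, and the desired identity will fall out of the definition \eqref{mtg1} of $\nabla$-martingale applied to $f$.

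Concretely, I would proceed as follows. First, use the Stratonovich chain rule to write
\begin{equation*}
\ud f(\bd X^p_t) = P(\bd V)f(\bd X^p_t)\circ \ud W_t - \tfrac12 \big(\nabla_{P(\bd V)}P(\bd V)\big)f(\bd X^p_t)\,\ud t.
\end{equation*}
Second, convert the Stratonovich stochastic integral into an \Ito\/ integral; the quadratic covariation correction produces an extra drift $\tfrac12\, P(\bd V)\bigl(P(\bd V)f\bigr)(\bd X^p_t)\,\ud t$ since $W$ has unit quadratic variation. Third, apply the algebraic identity used in the proof of Proposition~\ref{prop}, now with $\bd V$ replaced by $P(\bd V)$, namely
\begin{equation*}
P(\bd V)\bigl(P(\bd V)f\bigr) = \bigl(\nabla_{P(\bd V)}P(\bd V)\bigr)f + \nabla^2 f\bigl(P(\bd V),P(\bd V)\bigr),
\end{equation*}
which combines with the $-\tfrac12\nabla_{P(\bd V)}P(\bd V)f$ term to leave a net bounded variation contribution of $\tfrac12\nabla^2 f(P(\bd V),P(\bd V))\,\ud t$. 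The remaining \Ito\/ stochastic integral $\int_0^t P(\bd V)f(\bd X^p_s)\,\ud W_s$ is a local martingale by construction, yielding the stated decomposition.

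Since the algebraic identity $\bd U(\bd U f) = (\nabla_{\bd U}\bd U)f + \nabla^2 f(\bd U,\bd U)$ is valid for any smooth vector field $\bd U$ on $\mathcal M$, the argument is genuinely agnostic to whether $\bd U$ is tangent to a submanifold, so no additional care is needed at this step. The only conceptual subtlety, which should be flagged rather than proved, is that $P$ acts on the driving vector field rather than on the process itself; the resulting process $\bd X^p_t$ need not remain on the submanifold $\mathcal O$ even if it starts there, unless one further adds a deterministic projective correction (this is the standard distinction between the horizontal lift and the rolled-out Brownian motion). I do not anticipate any real obstacle: the computation is a line-by-line reprise of the proof of Proposition~\ref{prop}.
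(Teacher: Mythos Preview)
Your proposal is correct and follows exactly the approach implicit in the paper: the lemma is stated there without proof because it is an immediate instance of Proposition~\ref{prop} with the single driving vector field $P(\bd V)$ in place of the $\bd V_\alpha$, and your line-by-line reprise of that proposition's proof (Stratonovich chain rule, \Ito\/ conversion, then the identity $\bd U(\bd U f)=(\nabla_{\bd U}\bd U)f+\nabla^2 f(\bd U,\bd U)$) is precisely the intended argument. Your remark that $\bd X^p_t$ need not remain on $\mathcal O$ is a fair caveat but, as you note, irrelevant to the computation since the identity holds for any smooth vector field on $\mathcal M$.
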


For an SDE with both diffusion and drift terms, we decompose the RHS of the SDE into the sum of a $\nabla$-martingale diffusion term and drift term and project them respectively. The result is that the evolution of $f(X^p_t)$, for $f\in C^\infty(\mathcal M)$ and a solution to the projected SDE $\bd X^p_t$, only contains the values of $\nabla f$ and $\nabla^2 f$ in the subspace to be projected.

When we have an orthonormal basis $\{\bd V_{\alpha}\}_{\alpha=1}^n$ of $T\mathcal M$ such that $\{\bd V_{\alpha}\}_{\alpha=1}^d$ is also an orthonormal basis of $T\mathcal O$, and $\bd X_t$ satisfies equation \eqref{sdeBM}, we have 
\begin{align}
\label{eq:proj1}
    P(\ud \bd X_t)=&\sum_{\alpha=1}^d(\bd V_{\alpha}\circ\ud W^{\alpha}_t-\frac12\nabla_{\bd V_{\alpha}}\bd V_{\alpha}\ud t),\\
    \label{eq:proj2}
    P^{\perp}(\ud \bd X_t)=&\sum_{\alpha=d+1}^d(\bd V_{\alpha}\circ\ud W^{\alpha}_t-\frac12\nabla_{\bd V_{\alpha}}\bd V_{\alpha}\ud t).
\end{align}
This is the form in which we will prove Theorem~\ref{thm1} and Theorem~\ref{thm2}.

\subsection{Some matrix algebra to be used}\label{sec:matrix}
In this section, we introduce some matrix algebra to be frequently used in the following sections.

We first introduce a matrix representation of $\mathfrak u(n)$, the Lie algebra of unitary group $\mathrm U(n)$. Define
\begin{align}
    &\alpha_{k,l}:=\frac{e_le_k^T-e_ke_l^T}{\sqrt 2}\,,&&1\leq k<l\leq n\,,\\
    &\beta_{k}:=\ii e_ke_k\,,&&1\leq k\leq n\,,\\
    &\beta_{k,l}:=\ii\frac{e_le_k^T+e_ke_l^T}{\sqrt 2}\,,&&1\leq k<l\leq n\,.
\end{align}
then $\mathfrak u(n)$ is the linear space spanned by these vectors
\begin{align}\label{un_matrix}
    &\mathfrak u(n)=\mathrm{Span}(\{\alpha_{k,l}\}_{1\leq k<l\leq n}^n\cup\{\beta_{k}\}_{1\leq k\leq n}\cup\{\beta_{k,l}\}_{1\leq k<l\leq n})\,,\\
    &\qquad=\Big\{A\in M_{n\times n}|A=-A^\dag:=-(\bar A)^T\Big\}\,.\nonumber
\end{align}
The validity of this statement can be verified by dimension counting.

We should note that this basis of $\mathfrak u(n)$ is also orthonormal under the inner product $\mathrm{Re}(\mathrm{Tr}(UV^\dag))$:
\begin{align}\label{matrix_orth}
    &\mathrm{Re}(\mathrm{Tr}(\alpha_{i,j}\alpha_{k,l}^\dag))=\delta_{ik}\delta_{jl}\,,
    &&\mathrm{Re}(\mathrm{Tr}(\beta_i\beta_k^\dag))=\delta_{ik}\,,
    &&\mathrm{Re}(\mathrm{Tr}(\beta_{i,j}\beta_{k,l}^\dag))=\delta_{ik}\delta_{jl}\,,\\
    &\mathrm{Re}(\mathrm{Tr}(\alpha_{i,j}\beta_{k}^\dag))=0\,,
    &&\mathrm{Re}(\mathrm{Tr}(\alpha_{i,j}\beta_{k,l}^\dag))=0\,,
    &&\mathrm{Re}(\mathrm{Tr}(\beta_{i}\beta_{k,l}^\dag))=0\,.\nonumber
\end{align}

Their commutator is defined to be $[A,B]:=AB-BA$. We are not going to write down all commutators, but the following property will be used:
\begin{align}\label{commutator_perp}
    \mathrm{Tr}([A,B]B^\dag)=-\mathrm{Tr}([A,B]B)=-\mathrm{Tr}(AB^2-BAB)=0\,.
\end{align}
which means that the commutator $[A,B]$ is perpendicular to $B$ (and $A$ by symmetry) under inner product. By the completeness of the inner product $\mathrm{Re}(\mathrm{Tr}(UV^\dag))$, we have that the commutator $[A,B]$, for $A,B$ chosen from the basis above, is a linear combination of basis vectors other than $A$ and $B$.

In the analysis of symmetric matrix $X\in\mathrm{Sym}(n)$, its diagonalization $X=PM P^T$ always exists, where $P\in\mathrm O(n)$ and $M$ is $n\times n$ diagonal matrix. For complex symmetric matrices we also have the following decomposition.
\begin{lemma}[Takagi, \protect{\cite[Theorem 2]{Takagi}}]
\label{lem:takagi}
    Each complex symmetric matrix $A$ can be decomposed to $A=QM Q^T$, where $M=\mathrm{diag}\,(\mu_1,...,\mu_n)$ is a real nonnegative diagonal matrix and $Q \in \mathrm U(n)$.
\end{lemma}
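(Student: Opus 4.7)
The plan is induction on the dimension $n$. The base case $n=1$ is immediate: write $A = re^{i\theta}$ with $r \geq 0$, take $M = r$ and $Q = e^{i\theta/2}$, so $Q M Q^T = e^{i\theta}\,r = A$. For the inductive step, the strategy is to peel off one singular value at a time through a variational argument on the unit sphere, in the spirit of the Rayleigh quotient proof of spectral theorems.

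Suppose the factorization exists in all dimensions strictly less than $n$, and let $A$ be a complex symmetric $n\times n$ matrix. The case $A=0$ is trivial ($Q=I$, $M=0$), so assume $A \neq 0$. Consider the continuous real-valued function $f(x) = |x^T A x|$ on the compact unit sphere $S = \{x \in \C^n : x^* x = 1\}$. Let $q_1 \in S$ achieve the maximum value $\mu_1 := f(q_1) > 0$. Replacing $q_1$ by $e^{i\alpha} q_1$ multiplies $q_1^T A q_1$ by $e^{2i\alpha}$, so choosing $\alpha$ appropriately we may assume $q_1^T A q_1 = \mu_1 \geq 0$ is real and nonnegative.

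The key step is to show that $q_1^T A y = 0$ for every $y \in \C^n$ satisfying $q_1^* y = 0$. For such a $y$, consider $x(t) = (q_1 + ty)/\|q_1 + ty\|$ for small $t \in \C$; since $q_1^* y = 0$, one has $\|q_1 + ty\|^2 = 1 + |t|^2 \|y\|^2$. Using the symmetry of $A$ (so that $y^T A q_1 = q_1^T A y$), a first-order expansion yields
\begin{equation*}
x(t)^T A x(t) = \mu_1 + 2t\,(q_1^T A y) + O(|t|^2).
\end{equation*}
Writing $q_1^T A y = re^{i\phi}$ and specializing to $t = s\,e^{-i\phi}$ with $s$ real and small, the first-order increment equals the real number $2sr$, so $|x(t)^T A x(t)| = \mu_1 + 2sr + O(s^2)$. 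Choosing the sign of $s$ to match that of $r$ would then violate the maximality of $\mu_1$ unless $r=0$. Hence $q_1^T A y = 0$, as claimed.

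Extend $q_1$ to a unitary matrix $Q_1 = [\,q_1 \mid Y\,]$ whose remaining columns form an orthonormal basis of the orthogonal complement $q_1^\perp$. Then
\begin{equation*}
Q_1^T A Q_1 = \begin{pmatrix} q_1^T A q_1 & q_1^T A Y \\ Y^T A q_1 & Y^T A Y \end{pmatrix} = \begin{pmatrix} \mu_1 & 0 \\ 0 & A_1 \end{pmatrix},
\end{equation*}
with $A_1 := Y^T A Y$ an $(n-1)\times(n-1)$ complex symmetric matrix. By the inductive hypothesis, $A_1 = Q'\,M'\,(Q')^T$ with $Q' \in \mathrm U(n-1)$ and $M' = \mathrm{diag}(\mu_2,\ldots,\mu_n)$ nonnegative. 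Setting
\begin{equation*}
Q := Q_1 \begin{pmatrix} 1 & 0 \\ 0 & Q' \end{pmatrix} \in \mathrm U(n), \qquad M := \begin{pmatrix} \mu_1 & 0 \\ 0 & M' \end{pmatrix},
\end{equation*}
yields $A = Q M Q^T$, completing the induction.

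The main obstacle is the first-order variational step. In the real symmetric setting one maximizes a genuine quadratic form, so standard Lagrange multiplier reasoning suffices; here one is instead maximizing the modulus of a complex-valued quadratic, and the crucial trick is to align the phase of the complex perturbation parameter $t$ with the phase of $q_1^T A y$ in order to convert the first-order perturbation into a strict increase of $|x^T A x|$. Once this forces the off-diagonal entries $q_1^T A y$ to vanish, the block-triangular splitting and the induction are routine.
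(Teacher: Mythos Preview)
The paper does not supply its own proof of this lemma; it simply cites Takagi's original paper. So there is no ``paper's proof'' to compare your argument against, and any correct self-contained argument is a bonus.

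Your variational approach is sound and is in fact close to how Takagi's factorization is often proved. The polarization argument (implicit in your claim that $\mu_1>0$ when $A\neq 0$) and the first-order perturbation step are both correct: the phase-alignment trick you describe is exactly what is needed to force $q_1^T A y=0$ for all $y\perp q_1$ in the Hermitian sense.

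There is, however, a genuine algebraic slip in the final assembly. From $Q_1^T A Q_1 = B$ with $Q_1$ unitary you conclude $A = Q_1 B Q_1^T$, but inverting gives
\[
A \;=\; (Q_1^T)^{-1}\,B\,Q_1^{-1} \;=\; \overline{Q_1}\,B\,\overline{Q_1}^{\,T},
\]
since for unitary $Q_1$ one has $Q_1^{-1}=Q_1^*=\overline{Q_1}^{\,T}$ and hence $(Q_1^T)^{-1}=\overline{Q_1}$, not $Q_1$. Your formula $Q M Q^T = Q_1 B Q_1^T$ equals $A$ only if $Q_1 Q_1^T = I$, which would require $Q_1$ to be real orthogonal rather than merely unitary. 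The fix is immediate: take
\[
Q \;:=\; \overline{Q_1}\begin{pmatrix} 1 & 0 \\ 0 & Q' \end{pmatrix},
\]
which is still unitary, and then $A = Q M Q^T$ as desired. With this correction the induction closes and the proof is complete.
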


Such decomposition provides an isomorphism between linear vector spaces: the tangent spaces $T\mathrm{Sym}(n,\mathbb C)$ and $T\mathbb R^{n}\otimes \mathfrak u(n)$. To be specific, we consider the following basis of $T\mathrm{Sym}(n,\mathbb C)$: define $E^R$, $E^I$ matrices as:
\begin{align}\label{symn_matrix}
    &E^R_{k}=e_ke_k^T\,,&&E^I_k=\ii e_ke_k^T\,,&&1\leq k\leq n\,,\\
    &E^I_{k,l}=\ii\frac{e_ke_l^T+e_le_k^T}{\sqrt 2}\,,&&E^R_{k,l}=\frac{e_ke_l^T+e_le_k^T}{\sqrt 2}\,,&&1\leq k<l\leq n\,.
\end{align}
and define tangent vector fields:
\begin{align}\label{tangent_vectors1}
    &\bd l_k:&&\bd l_k(\ud X)=QE^R_kQ^T\,,&&1\leq k\leq n\,,\\
    &\bd u_{(k)}:&&\bd u_{(k)}(\ud X)=2\mu_kQE^I_kQ^T\,,&&1\leq k\leq n\,,\\
    &\bd u_{(k,l,1)}:&&\bd u_{(k,l,1)}(\ud X)=(\mu_k+\mu_l)QE^I_{k,l}Q^T\,,&&1\leq k<l\leq n\,,\\
    &\bd u_{(k,l,2)}:&&\bd u_{(k,l,2)}(\ud X)=(\mu_k-\mu_l)QE^R_{k,l}Q^T\,,&&1\leq k<l\leq n\,.
\end{align}
We also use the following basis of $T\mathbb R^n\otimes \mathfrak u(n)$:
\begin{align}\label{tangent_vectors2}
    &\bd l'_k:&&\bd l'_k(\ud M)=e_ke_k^T\,,&&\bd l'_k(\ud Q)=0\,,&&1\leq k\leq n\,,\\
    &\bd u'_{(k)}:&&\bd u'_{(k)}(\ud M)=0\,,&&\bd u'_{(k)}(\ud Q)=Q\beta_k\,,&&1\leq k\leq n\,,\\
    &\bd u'_{(k,l,1)}:&&\bd u'_{(k,l,1)}(\ud M)=0\,,&&\bd u'_{(k,l,1)}(\ud Q)=Q\beta_{k,l}\,,&&1\leq k<l\leq n\,,\\
    &\bd u'_{(k,l,2)}:&&\bd u'_{(k,l,2)}(\ud M)=0\,,&&\bd u'_{(k,l,2)}(\ud Q)=Q\alpha_{k,l}\,,&&1\leq k<l\leq n\,.
\end{align}

We claim that these two set of vector fields are exactly the same vector fields. More precisely, let $\pi:\mathbb R^n\otimes \mathrm U(n)\to \mathrm{Sym}(n,\mathbb C),\pi(M,Q)=QM Q^T$ be the bijection between $\mathbb R^n\otimes \mathrm U(n)$ and $\mathrm{Sym}(n,\mathbb C)$, and $\pi_*:T\mathbb R^n\otimes \mathfrak u(n)\to T\mathrm{Sym}(n,\mathbb C)$ be the differential of $\pi$, or the pushforward map, we have $\pi_*(\bd v')=\bd v$ for $\bd v$ being any vector fields defined above.

To see this, one should check the following matrix identities:
\begin{align}
    &[e_ke_k^T,M]=0\,,&&[\alpha_{k,l},M]=(\mu_k-\mu_l)E^R_{k,l}\,,\nonumber\\
    &\{\beta_k,M\}=2\mu_kE^I_k\,,&&\{\beta_{k,l},M\}=(\mu_k+\mu_l)E^I_{k,l}\,.\nonumber 
\end{align}
for $M$ being diagonal matrix, where $\{A,B\}=AB+BA$ is the anti-commutator. Then by taking the differential of $X=QM Q^T$:
\begin{align}\label{dR_dLdQ}
    \ud X=&Q(\ud M+Q^{-1}\ud QM+M \ud Q^T(Q^T)^{-1})Q^T\\
(\ud\omega:=Q^{-1}\ud Q)=&Q(\ud M+ \ud\omega M+M\ud\omega^T)Q^T\nonumber\\
=&Q(\ud M+\ud\omega M- M\bar{\ud \omega})Q^T\nonumber\\
=&Q(\ud M+[\mathrm{Re}(\ud\omega), M]+\{\ii\,\mathrm{Im}(\ud\omega), M\})Q^T\,,\nonumber
\end{align}
we may verify for example that
\begin{align}
    \bd u'_{(k,l,1)}(\ud X)=&Q\{\beta_{k,l}, M\}Q^T=Q(\mu_k+\mu_l)(\frac{e_ke_l^T+e_le_k^T}{\sqrt 2})Q^T\\
    =&(\mu_k+\mu_l)QE^I_{k,l}Q^T=\bd u_{(k,l,1)}(\ud X)\,.\nonumber
\end{align}

\section{Proof of Theorem~\ref{thm1}}
\label{sec:th1-proof}
The proof of Theorem~\ref{thm1} consists of two parts.  First, we express the SDE~\eqref{eq:sdeZ} for $\bd Z_t$ on $\mathcal H$ in a more explicit form using the orthonormal bases described in Section~\ref{sec:background}. Next, we use eigenvalue perturbation theory, the definition of the cross-ratio $\mathfrak{R}(Z_t,iI_n)$ and the change of variables from $\lambda_t$ to $\sigma_t$ to show that $\sigma_t$ satisfies \eqref{eq:siegel-eig2}. Almost all the work lies in explicit computations related to the change of basis. The eigenvalue perturbation theory is standard, except near repeated eigenvalues. The well-posedness theory in Section~\ref{sec:wellposedness} allows us to focus on the case of distinct eigenvalues.

The outline of the computation of the basis is as follows. Following Siegel, we first change variables from $\HH$ to the equivalent of the unit disk for complex symmetric matrices (the domain $\DD$ below). This is a convenient coordinate system, since it makes several properties of the cross-ratio matrix obvious.  We will find an orthonormal basis $\{\bd L_k\}_{k=1}^n\cup \{\bd U_i\}_{i=1}^{n^2}$ for $T\mathcal H$, such that $\{\bd U_i\}_{i=1}^{n^2}$ is also an orthonormal basis of $T\mathcal O_{\lambda}$, and $\{\bd L_k\}_{k=1}^n\subset T\mathcal O_{\lambda}^{\perp}$. By Riemannian submersion, $\{\bd L_k\}_{k=1}$ also gives an orthonormal basis of $T\Lambda$ (identifying $\bd L_k$ with $\ud\pi(\bd L_k)$). Then by Corollary \ref{corBM}, we are able to construct Brownian motion on $(\mathcal H,g)$; equations \eqref{eq:proj1} and~\eqref{eq:proj2} then complete the definition of the SDE~\eqref{eq:siegel-eig2}.

\subsection{The unit disk $\DD$}
The domain of symmetric complex matrices defined by
\begin{equation}
    \label{eq:defD}
    \DD = \{ W =W^T \left| I - W\bar{W} \succ 0 \right. \}
\end{equation}
is the analogue of the open unit disk in $\C$. Since $W=W^T$, the matrix $I- W\bar{W}$ is Hermitian and the notation $I-W\bar{W} \succ 0$ means that it is positive definite. 

The conformal mapping $z \mapsto \frac{z-i}{z+i}$ maps the \Poincare\/ upper half-plane to the unit disk in $\C$. We write the analogous transformation for the Siegel half-space $\HH$ as
\begin{equation}
\label{eq:siegel5}
R=\varphi(Z)=(Z-\ii I_n)(Z+\ii I_n)^{-1}.    
\end{equation}
Since $Z=Z^T$ and these matrices commute, we have  $R=R^T$ for $Z \in \HH$, a property that we will use repeatedly. Siegel showed that $\DD=\varphi(\HH)$ by adapting the  proof of conformal equivalence between the upper half-plane and the unit disk to several complex variables~(see~\cite[II.4]{Siegel}). 

The transformation~\eqref{eq:siegel5} plays an important role in our work because the cross-ratio matrix $\mathfrak{R}(Z,\ii I_n)$ may be written
\begin{equation}
    \label{eq:crossratiodecomp}
    \mathfrak{R}(Z,\ii I_n) = R \bar{R}.
\end{equation}
Since $R=R^T$ we find immediately that 
\begin{equation}
    \label{eq:crossratiodecomp2}
    \bar{\mathfrak{R}}^T = (\bar{R}R)^T=R^T\bar{R}^T = R\bar{R}= \mathfrak{R}.
\end{equation}
Thus, $\mathfrak{R}$ is Hermitian positive semi-definite. This property allows us to obtain precise descriptions of the orbits $\mathcal O_{\lambda}$ and the metric $g_\HH$ as outlined below. We begin with a classical 
\begin{lemma}
\label{lem:DD-char}
The domain $\DD$ may be expressed as
\begin{align}
\label{eq:DD-explicit}
   \DD = \varphi(\mathcal H)=\big\{Q M Q^T\big|Q\in U(n),  M\text{ is diagonal},\;0\leq \mu_i<1,i=1,...,n\big\}.
\end{align}
\end{lemma}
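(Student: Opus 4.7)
The plan is to build on Takagi's decomposition (Lemma~\ref{lem:takagi}) and reduce the defining inequality $I - W\bar{W} \succ 0$ to a simple condition on the singular values $\mu_i$. The identity $\DD = \varphi(\HH)$ is Siegel's classical result, so the content of the lemma is the characterization of $\DD$ via Takagi decomposition with singular values constrained to $[0,1)$.

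First I would establish the easy direction: suppose $W = QMQ^T$ with $Q\in U(n)$ and $M = \mathrm{diag}(\mu_1,\ldots,\mu_n)$ real, $\mu_i \geq 0$. Then $W^T = W$ is immediate from $M^T = M$. For the positivity condition, the key identity is $Q^T\bar{Q} = I$, obtained by transposing the unitary relation $\bar{Q}^T Q = I$. Using this and the reality of $M$, I compute
\begin{equation*}
W\bar{W} \;=\; QMQ^T \cdot \bar{Q}M\bar{Q}^T \;=\; QM(Q^T\bar{Q})M\bar{Q}^T \;=\; QM^2 Q^*,
\end{equation*}
so that $I - W\bar{W} = Q(I - M^2)Q^*$. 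This is positive definite if and only if $I - M^2 \succ 0$, i.e.\ $\mu_i < 1$ for every $i$. Since $\mu_i \geq 0$ by Takagi, the hypothesis $0 \leq \mu_i < 1$ is exactly what is needed for $W \in \DD$.

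For the converse direction, I take an arbitrary $W \in \DD$. By definition $W = W^T$, so Lemma~\ref{lem:takagi} provides a factorization $W = QMQ^T$ with $Q \in U(n)$ and $M$ real nonnegative diagonal. The computation above shows $I - W\bar{W} = Q(I - M^2)Q^*$, and since $I - W\bar{W}\succ 0$ by hypothesis, conjugation by the unitary $Q^*$ gives $I - M^2 \succ 0$. Reading off the diagonal entries yields $\mu_i^2 < 1$, hence $0 \leq \mu_i < 1$, proving $W$ belongs to the right-hand side of \eqref{eq:DD-explicit}.

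The only substantive step is the commutation identity $Q^T\bar{Q} = I$ for unitary $Q$; once this is in hand, the equivalence $I - W\bar{W}\succ 0 \Longleftrightarrow I - M^2 \succ 0$ is a one-line unitary conjugation. I do not anticipate any genuine obstacle here, so the proof is essentially a short corollary of Takagi's theorem combined with a careful bookkeeping of transposes versus conjugate transposes. Siegel's identity $\DD = \varphi(\HH)$ is invoked unchanged from~\cite{Siegel}.
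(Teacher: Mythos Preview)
Your argument is correct and, in fact, a bit more direct than the paper's. Both proofs cite Siegel for $\DD=\varphi(\HH)$ and invoke Takagi's decomposition for the form $W=QMQ^T$; the difference lies in how the constraint $0\le\mu_i<1$ is obtained. You work intrinsically in $\DD$: the identity $Q^T\bar Q=I$ gives $W\bar W=QM^2Q^*$, so $I-W\bar W\succ0$ is equivalent by unitary conjugation to $I-M^2\succ0$. The paper instead passes back to $\HH$ via $\varphi^{-1}$, computing
\[
\mathrm{Im}(Z)=(I-R)^{-1}(I-R\bar R)(I-\bar R)^{-1}=(\bar Q^T-MQ^T)^{-1}(I-M^2)(Q-\bar QM)^{-1},
\]
and reads off $\mathrm{Im}(Z)\succ0 \Leftrightarrow \mu_i^2<1$. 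Your route is shorter and avoids inverting $\varphi$; the paper's route has the minor advantage that it simultaneously exhibits the explicit form of $\mathrm{Im}(Z)$ in the $(Q,M)$ coordinates, which is in the spirit of the later metric computations but not logically needed for this lemma.
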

\begin{proof}
The first equality in equation~\eqref{eq:DD-explicit} follows from~\cite[II.4]{Siegel}. The second equality  is also straightforward. We decompose $R=Q M Q^T$ using Lemma~\ref{lem:takagi} and invert equation~\eqref{eq:siegel5} to find
\begin{align}
    \mathrm{Im}(Z)=&(I_n-R)^{-1}(I_n-R\bar R)(I_n-\bar R)^{-1}\nonumber\\
    =&(\bar{Q}^T- M Q^T)^{-1}(I_n- M^2)(Q-\bar Q M)^{-1}.
\end{align}
It is immediate that $\mathrm{Im}(Z)\succ 0$ if and only if $\mu_i^2<1$ for $i=1, \ldots, n$. 
\end{proof}
\begin{lemma}
    For $Z\in\mathcal H$ and $\lambda=\mathrm{eig}\;\mathfrak R(Z,\ii I_n)$, the orbit $\mathcal O_{\lambda}$ is represented by 
\begin{align}
    \mathcal O_{\lambda}=\varphi^{-1}\Big(\{Q M Q^T| M=\mathrm{diag}(\mu_1,...,\mu_n),Q\in U(n)\}\Big)
\end{align}
where $\lambda=(\mu_1^2,...,\mu_n^2)$.
\end{lemma}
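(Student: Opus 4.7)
The plan is to transfer the orbit description from $\HH$ to the unit disk $\DD$ via the biholomorphism $\varphi$ of equation~\eqref{eq:siegel5}, where the cross-ratio factorizes cleanly thanks to equation~\eqref{eq:crossratiodecomp}. Once in $\DD$, the Takagi decomposition from Lemma~\ref{lem:takagi} (combined with Lemma~\ref{lem:DD-char}) supplies the normal form $R = QMQ^T$, and a one-line computation will identify the eigenvalues of $\mathfrak R(Z,\ii I_n)$ with $\mu_1^2,\ldots,\mu_n^2$.

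Concretely, I would start by writing $R=\varphi(Z)$ and observing from \eqref{eq:crossratiodecomp} that $\mathfrak R(Z,\ii I_n)=R\bar R$. Applying Lemma~\ref{lem:DD-char}, decompose $R=QMQ^T$ with $Q\in U(n)$ and $M=\mathrm{diag}(\mu_1,\ldots,\mu_n)$, $0\leq\mu_i<1$. The key algebraic identity needed is
\begin{equation*}
Q^T\bar Q = \overline{Q^\dagger Q} = I,
\end{equation*}
which is just the unitarity relation $Q^\dagger Q=I$ conjugated componentwise; equivalently $\bar Q^T = Q^{-1}$. Substituting,
\begin{equation*}
R\bar R \;=\; QMQ^T\,\bar Q M\bar Q^T \;=\; Q M(Q^T\bar Q) M \bar Q^T \;=\; QM^2 Q^{-1},
\end{equation*}
so $\mathfrak R(Z,\ii I_n)$ is conjugate to $M^2$ and hence its eigenvalues are precisely $\mu_1^2,\ldots,\mu_n^2$.

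With this identification in hand, both inclusions in the lemma are immediate. If $Z\in\mathcal O_\lambda$, then the $\mu_i$ from the Takagi decomposition of $\varphi(Z)$ satisfy $\{\mu_i^2\}=\{\lambda_i\}$ as multisets, placing $\varphi(Z)$ in the right-hand family. Conversely, any $R=QMQ^T$ with $\{\mu_i^2\}=\{\lambda_i\}$ gives $\varphi^{-1}(R)\in\HH$ (the constraint $0\leq\mu_i<1$ is built into Lemma~\ref{lem:DD-char}) whose cross-ratio has spectrum $\lambda$. The only subtlety worth flagging is that the decomposition is not unique (one can permute the $\mu_i$ and absorb unitary stabilizers into $Q$), but this matches exactly the freedom in the group-orbit description and so causes no trouble. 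There is no serious obstacle: once the factorization $\mathfrak R=R\bar R$ and the identity $Q^T\bar Q=I$ are combined, the lemma is a two-line computation.
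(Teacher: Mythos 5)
Your proof is correct and takes essentially the same route as the paper: factor the cross-ratio as $\mathfrak R(Z,\ii I_n)=R\bar R$ via \eqref{eq:crossratiodecomp}, apply the Takagi decomposition $R=QMQ^T$ (with the constraint $0\leq\mu_i<1$ supplied by Lemma~\ref{lem:DD-char}), and identify the spectrum with $\{\mu_i^2\}$. Your explicit computation $R\bar R=QM^2Q^{-1}$ simply spells out the conjugation step that the paper leaves implicit, and your handling of the two inclusions and the non-uniqueness of the decomposition is fine.
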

\begin{proof}
    By equation~\eqref{eq:crossratiodecomp} and Lemma~\ref{lem:takagi}, the eigenvalues of $\mathfrak R(Z,\ii I_n)$ are just the diagonal elements $\{\mu_i^2\}_{i=1}^n$. Thus, 
    \begin{align}
        \varphi(\mathcal O_{\lambda})=\{Q M Q^T| M=\mathrm{diag}(\mu_1,...,\mu_n),Q\in U(n)\}
    \end{align}
    where $\lambda=(\mu_1^2,...,\mu_n^2)$.
\end{proof}

\subsection{The orthonormal basis}
Now we will construct an orthonormal basis of $T\mathcal H$ and $T\mathcal O_{\lambda}$. As $\varphi:\mathcal H\to\mathcal D$ is an analytic bijection and we will equip $\mathcal D$ with metric that is compatible with $\varphi$ later, we can regard $(\mathcal H,g_{\mathcal H})$ and $(\mathcal D,g_{\mathcal D})$ as the same Riemannian manifold, while $Z$ and $R$ are just two coordinate systems on the this manifold. Also, vector fields related by the pushforward map $\ud\pi$ (e.g. $\bd u$ in \eqref{tangent_vectors1} and $\bd u'$ in \eqref{tangent_vectors2}) are regarded as representations of the same vector field under different coordinate systems.

In this section we will mainly use $R$ and its diagonalization $R=Q M Q^T$ as coordinate system. According to section~\ref{sec:matrix}, we already have a basis \eqref{tangent_vectors1} of $T_R\mathcal D=T_R\mathrm{Sym}(n,\mathbb C)$ for $R\in\mathcal D$.

By definition of $\bd u$ vector fields \eqref{tangent_vectors2}, we also know that for each $Z\in\mathcal O_{\lambda}$
\begin{align}
    T_Z\mathcal O_{\lambda}=\mathrm{Span}\big(\{\bd u_{(k)}\}_{k=1}^n\cup\{\bd u_{(k,l,1)}\}_{1\leq k<l\leq n}\cup\{\bd u_{(k,l,2)}\}_{1\leq k<l\leq n}\big).
\end{align}

Now we will compute the metric $g_{\mathcal D}$ that is compatible with settings above. In view of bilinear form, we just need to compute the transformation coefficient of $\ud\pi$ and get $g_{\mathcal D}$. In a geometric point of view, we have $g_{\mathcal D}=g_{\mathcal H}$ in the sense that they are the same tensor in $\mathfrak X(T^*\mathcal H\otimes T^*\mathcal H)$ represented by different coordinate systems. Under the geometric point of view, we use 
\begin{align}
    R=&(Z-\ii I_n)(Z+\ii I_n)^{-1},\quad Z=\ii(I_n+R)(I_n-R)^{-1}\,,\\
    \ud Z=&2\ii (I_n-R)^{-1}\ud R(I_n-R)^{-1}\,,\nonumber\\
\text{ and }    Y=&\frac1{2\ii}(Z-\bar Z)=(I_n-R)^{-1}(I_n-R\bar R)(I_n-\bar R)^{-1}\\
=&\bar Y=(I_n-\bar R)^{-1}(I_n-\bar R R)(I_n-R)^{-1}\,,\nonumber
\end{align}
and compute
\begin{align}
    g_{\mathcal H}=&\mathrm{Tr}(Y^{-1}\ud Z Y^{-1}\ud \bar Z)=\mathrm{Tr}(Y^{-1}\ud Z (\bar Y)^{-1}\ud \bar Z)\\
    =&\mathrm{Tr}\Big([(I_n-\bar R)(I_n-R\bar R)^{-1}(I_n-R)][2\ii (I_n-R)^{-1}\ud R(I_n-R)^{-1}]\nonumber\\
    &\quad [(I_n-R)(I_n-\bar R R)^{-1}(I_n-\bar R)][(-2\ii)(I_n-\bar R)^{-1}\ud \bar R(I_n-\bar R)^{-1}]\Big)\nonumber\\
    =&4\,\mathrm{Tr}\Big((I_n-R\bar R)^{-1}\ud R(I_n-\bar RR)^{-1}\ud \bar R\Big)=g_{\mathcal D}. \nonumber
\end{align}

Notice that when $n=1$, $g_{\mathcal D}=\frac{4\ud z\ud\bar z}{(1-|z|^2)^2}$ coincides with the \Poincare\/ metric on the unit disk. From now on we will only use $g_{\mathcal H}$ for both $g_{\mathcal H}$ and $g_{\mathcal D}$, as they are conceptually the same geometric object. In the following theorem, we will show that the metric $g_{\mathcal H}$ is diagonalized under the basis \eqref{tangent_vectors1}. Thus, if we normalize the basis \eqref{tangent_vectors1} we will get an orthonormal basis.


\begin{theorem}\label{uvectors}
    The following vector fields $\{\bd L_k\}_{k=1}^n\cup\{\bd U_{(k)}\}_{k=1}^n\cup \{\bd U_{(k,l,m)}\}_{1\leq k<l\leq n,m=1,2}$ give an orthonormal basis of $\mathfrak X(T\mathcal H)$. Moreover, $\{\bd U_{(k)}(Z)\}_{k=1}^n\cup \{\bd U_{(k,l,m)}(Z)\}_{1\leq k<l\leq n,m=1,2}$ give an orthonormal basis of $T_Z\mathcal O_\lambda$ for each $Z\in\mathcal O_\lambda$ and $\lambda\in\mathcal W$.
    
    At a point $\varphi(Z)=R=Q M Q^T$, where $ M=\mathrm{diag}(\tanh\frac{\sigma^1}2,...,\tanh\frac{\sigma^n}2)$, the vector fields are defined as follows
    \begin{align}\label{orthonormal_basis}
        &\bd L_k(\ud R)=\frac1{2\cosh^2{\frac{\sigma^k}2}}Q\Big(e_ke_k^T\Big)Q^T,&&1\leq k\leq n,\\
        &\bd U_{(k)}(\ud R)=\frac1{2\cosh^2{\frac{\sigma^k}2}}Q\Big(\ii e_ke_k^T\Big)Q^T,&&1\leq k\leq n,\\
        &\bd U_{(k,l,1)}(\ud R)=\frac1{2\cosh{\frac{\sigma^k}2}\cosh{\frac{\sigma^l}2}}Q\Big(\frac{\ii(e_ke_l^T+e_le_k^T)}{\sqrt 2}\Big)Q^T,&&1\leq k<l\leq n,\\
        &\bd U_{(k,l,2)}(\ud R)=\frac1{2\cosh{\frac{\sigma^k}2}\cosh{\frac{\sigma^l}2}}Q\Big(\frac{e_ke_l^T+e_le_k^T}{\sqrt 2}\Big)Q^T,&&1\leq k<l\leq n.
    \end{align}
\end{theorem}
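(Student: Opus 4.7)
The plan is to verify the theorem by direct computation in the disc coordinates $R = \varphi(Z) \in \mathcal D$, using the explicit formulas for the vector fields together with the metric expression
\begin{equation*}
g_{\mathcal H} = 4\,\Tr\!\left((I_n - R\bar R)^{-1}\ud R\,(I_n - \bar R R)^{-1}\ud \bar R\right)
\end{equation*}
derived above. A dimension count confirms that there are $n + n + 2\binom{n}{2} = n(n+1)$ listed vectors, matching $\dim_\R \mathcal H$, so once orthonormality is established pointwise the collection automatically spans $T_Z \mathcal H$.

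First I would simplify the inverse factors at $R = QMQ^T$ with $M = \mathrm{diag}(\mu_k)$, $\mu_k = \tanh(\sigma^k/2)$. Using the unitary identity $Q^T\bar Q = I_n$ one obtains $(I_n - R\bar R)^{-1} = Q(I_n - M^2)^{-1}\bar Q^T$ and $(I_n - \bar R R)^{-1} = \bar Q(I_n - M^2)^{-1}Q^T$. Every listed vector field has the form $\bd V(\ud R) = c_V\, Q E_V Q^T$ with $E_V \in \{E^R_k, E^I_k, E^R_{k,l}, E^I_{k,l}\}$, and cyclically reducing the trace collapses each pairing to
\begin{equation*}
g_{\mathcal H}(\bd V_1,\bd V_2) = 4\,\Re\!\left(c_{V_1}\,\overline{c_{V_2}}\,\Tr\!\left((I_n - M^2)^{-1} E_{V_1}\,(I_n - M^2)^{-1}\,\bar E_{V_2}\right)\right).
\end{equation*}
Because $(I_n - M^2)^{-1} = \mathrm{diag}(\cosh^2(\sigma^k/2))$, the identity $(e_i e_j^T)(e_k e_l^T) = \delta_{jk}\,e_i e_l^T$ together with the orthogonality relations \eqref{matrix_orth} kills off-diagonal pairings, while on the diagonal the $\cosh$-powers from $(I_n-M^2)^{-1}$ cancel exactly against the normalizations $c_V$ in the statement, yielding $g_{\mathcal H}(\bd V,\bd V) = 1$.

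For the second claim, the vectors $\bd U_{(\cdot)}$ are by construction pushforwards under $\pi_*$ of the $\bd u'_{(\cdot)}$ of \eqref{tangent_vectors2}, which satisfy $\bd u'(\ud M) = 0$ and therefore preserve the eigenvalues $\mu_k^2 = \lambda^k$; with the dimension match $\dim \mathcal O_\lambda = n + 2\binom{n}{2} = n^2$, they exhaust $T_Z\mathcal O_\lambda$, and the already-established orthonormality descends to the orbit, with the $\bd L_k$ spanning the normal bundle.

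The main obstacle will be the matrix bookkeeping in the simplification of the metric: one has to deploy $Q\bar Q^T = \bar Q^T Q = I_n$ in exactly the right places to telescope the quadruple product above down to the single diagonal trace, and stay watchful about transpose versus adjoint conventions on $Q$ throughout. A secondary, minor subtlety is that the diagonalization $R = QMQ^T$ is only locally smooth where the eigenvalues of $R\bar R$ are distinct, but that is precisely the open set $\tilde{\mathcal H}$ on which we work, so no genuine singularity is encountered.
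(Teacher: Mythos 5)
Your proposal is correct and follows essentially the same route as the paper: both verify orthonormality by expressing the metric in the disc coordinates at $R=QMQ^T$, collapsing the $Q$-conjugations so that everything reduces to weighted diagonal traces against $(I_n-M^2)^{-1}=\mathrm{diag}(\cosh^2(\sigma^k/2))$, with the off-diagonal pairings vanishing (after taking real parts) and the normalizations cancelling the $\cosh$ factors. The paper merely organizes the same computation through the decomposition $\ud R=Q(\ud M+[\mathrm{Re}(\ud\omega),M]+\{\ii\,\mathrm{Im}(\ud\omega),M\})Q^T$ and the sum-of-squares identity \eqref{eq:sum_square} before normalizing, which is a cosmetic rather than substantive difference.
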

In order to simplify notation, we will use $\{\bd U_i\}_{i=1}^{n^2}$ to denote the union $\{\bd U_{(k)}\}_{k=1}^n\cup \{\bd U_{(k,l,m)}\}_{1\leq k<l\leq n,m=1,2}$ and use $\bd U_i$ for any vector field in $\{\bd U_i\}_{i=1}^{n^2}$, when there is no confusion.

\begin{proof}
    Notice that all $\bd L$ and $\bd U$ vectors are just normalized $\bd l$ and $\bd u$ vectors. We only need to prove that $g_{\mathcal H}$ is diagonalized under $\bd l$ and $\bd u$ basis \eqref{tangent_vectors1}, and extract all coefficient of squares.

    To do this, we first transform $g_{\mathcal H}$ into $( M,Q)$ system. Use \eqref{dR_dLdQ}, we get $g_{\mathcal H}$
    \begin{align}
        g_{\mathcal H}=&4\,\mathrm{Tr}\Big((I_n-R\bar R)^{-1}\ud R(I_n-\bar RR)^{-1}\ud \bar R\Big)\\
        =&4\,\mathrm{Tr}\Big((I_n- M^2)^{-1}(\ud M+[\mathrm{Re}(\ud\omega), M]+\{\ii\,\mathrm{Im}(\ud\omega), M\})\nonumber\\
        &\quad (I_n- M^2)^{-1}(\ud M+[\mathrm{Re}(\ud\omega), M]-\{\ii\,\mathrm{Im}(\ud\omega), M\})\Big)\nonumber\\
        =&4\Big(\mathrm{Tr}((I_n- M^2)^{-1}(\ud M+[\mathrm{Re}(\ud\omega), M])(I_n- M^2)^{-1}(\ud M+[\mathrm{Re}(\ud\omega), M]))\nonumber\\
        &\quad +\mathrm{Tr}((I_n- M^2)^{-1}\{\mathrm{Im}(\ud\omega), M\}(I_n- M^2)^{-1}\{\mathrm{Im}(\ud\omega), M\})\Big)\nonumber\\
        =&4\Big(\mathrm{Tr}((I_n- M^2)^{-2}\ud  M\ud M)\nonumber\\
        &\quad +\mathrm{Tr}((I_n- M^2)^{-1}[\mathrm{Re}(\ud\omega), M](I_n- M^2)^{-1}[\mathrm{Re}(\ud\omega), M])\nonumber\\
        &\quad +\mathrm{Tr}((I_n- M^2)^{-1}\{\mathrm{Im}(\ud\omega), M\}(I_n- M^2)^{-1}\{\mathrm{Im}(\ud\omega), M\})\Big)\,.\nonumber
    \end{align}
    where we used the fact that $(I_n- M^2)$ and $\ud  M$ are diagonal and $[\mathrm{Re}(\ud\omega), M]$ has zero diagonal.

    Take linear combination of basis with constant coefficients
    \begin{align}
        \bd v=\sum_{k=1}^nc_{k}\bd l_{k}+\sum_{k=1}^nd_k\bd u_{(k)}+\sum_{1\leq k< l\leq n}d_{k,l,1}\bd u_{(k,l,1)}+\sum_{1\leq k< l\leq n}d_{k,l,2}\bd u_{(k,l,2)},
    \end{align}
    we find that $g_{\mathcal H}(\bd v,\bd v)$ becomes a sum of squares
    \begin{align}\label{eq:sum_square}
        g_{\mathcal H}(\bd v,\bd v)=&\sum_{k=1}^n \frac{4}{(1-\mu_k^2)^2}c_{k}^2+\sum_{k=1}^n\frac{4(2\mu_k)^2}{(1-\mu_k^2)^2}d_k^2\\
        &+\sum_{1\leq k< l\leq n}\frac{4(\mu_k+\mu_l)^2}{(1-\mu_k^2)(1-\mu_l^2)}d_{k,l,1}^2+\sum_{1\leq k< l\leq n}\frac{4(\mu_k-\mu_l)^2}{(1-\mu_k^2)(1-\mu_l^2)}d_{k,l,2}^2.\nonumber
    \end{align}
    Thus, the vectors $\{\bd l_{k}\}_{k=1}^n\cup\{\bd u_{(k)}\}_{k=1}^n\cup\{\bd u_{(k,l,1)}\}_{1\leq k< l\leq n}\cup\{\bd u_{(k,l,2)}\}_{1\leq k< l\leq n}$ give an orthogonal basis. We normalize this basis to obtain an orthonormal basis \eqref{orthonormal_basis}.

    For example, we can check that 
    \begin{align}
        g_{\mathcal H}(\bd L_k,\bd L_k)=&g_{\mathcal H}(\frac1{2\cosh\frac{\sigma^k}2}\bd l_k,\frac1{2\cosh\frac{\sigma^k}2}\bd l_k)\nonumber\\
        =&\frac{4}{(1-\mu_k^2)^2}(\frac1{2\cosh\frac{\sigma^k}2})^2=1\,,\nonumber\\
        g_{\mathcal H}(\bd U_{(k,l,2)},\bd U_{(k,l,2)})=&g_{\mathcal H}(\frac1{2\cosh{\frac{\sigma^k}2}\cosh{\frac{\sigma^l}2}}\frac{\bd u_{(k,l,2)}}{(\mu_k-\mu_l)},\frac1{2\cosh{\frac{\sigma^k}2}\cosh{\frac{\sigma^l}2}}\frac{\bd u_{(k,l,2)}}{(\mu_k-\mu_l)})\nonumber\\
        =&(\frac1{2\cosh{\frac{\sigma^k}2}\cosh{\frac{\sigma^l}2}(\mu_k-\mu_l)})^2\frac{4(\mu_k-\mu_l)^2}{(1-\mu_k^2)(1-\mu_l^2)}=1\,.\nonumber
    \end{align}
\end{proof}

\subsection{Covariant derivatives}
To determine all terms in SDE \ref{sdeBM}, we have the following Lemma for the expression of $\nabla_{\bd V}\bd V$ for all $\bd L$ and $\bd U$ vectors.
\begin{lemma}\label{lemCoD}
    The covariant derivatives $\nabla_{\bd U}\bd U$ and $\nabla_{\bd L}\bd L$ are given by:
    \begin{align}\label{covariant_derivative}
        \nabla_{\bd L_k}\bd L_k=&0\,,&&1\leq k\leq n\,,\\
        \nabla_{\bd U_{(k)}}\bd U_{(k)}=&-\coth{\sigma^k}\bd L_k\,,&&1\leq k\leq n\,,\\
        \nabla_{\bd U_{(k,l,1)}}\bd U_{(k,l,1)}=&-\coth(\frac{\sigma^k}2+\frac{\sigma^l}2)\frac{\bd L_k+\bd L_l}2\,,&&1\leq k<l\leq n\,,\\
        \nabla_{\bd U_{(k,l,2)}}\bd U_{(k,l,2)}=&-\coth(\frac{\sigma^k}2-\frac{\sigma^l}2)\frac{\bd L_k-\bd L_l}2\,,&&1\leq k<l\leq n\,.
    \end{align}
\end{lemma}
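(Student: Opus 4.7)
My strategy is to invoke the Koszul identity for the Levi-Civita connection of $(\HH, g_\HH)$. For an orthonormal frame, all pairwise inner products are constant, so the Koszul formula collapses to
\begin{equation*}
g_\HH(\nabla_{\bd V}\bd V,\bd W) \;=\; g_\HH([\bd W,\bd V],\bd V)
\end{equation*}
for any two frame vectors $\bd V,\bd W$. Since Theorem~\ref{uvectors} supplies an orthonormal frame, the lemma reduces to computing Lie brackets of the $\bd L$'s and $\bd U$'s and reading off inner products.

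I will work in the $(M,Q)$-parametrization $R=QMQ^T$ from Section~\ref{sec:matrix}. Combining $\mu_k=\tanh(\sigma^k/2)$ with the explicit form of $\bd L_k$ gives $\bd L_k=\partial_{\sigma^k}$; in particular $\bd L_k(\sigma^j)=\delta_{kj}$ and the $\bd L$'s pairwise commute. Each $\bd U$ field splits as $\bd U_\alpha=f_\alpha(M)\,\bd T_\alpha$, where $\alpha$ ranges over the $\mathfrak u(n)$-basis $\{\beta_k,\beta_{k,l},\alpha_{k,l}\}$ of Section~\ref{sec:matrix} and $\bd T_\alpha$ is the right-translation field defined by $\bd T_\alpha(\ud Q)=Q\alpha$, $\bd T_\alpha(\ud M)=0$. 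Standard hyperbolic identities ($\tanh a+\tanh b=\sinh(a+b)/(\cosh a\cosh b)$, etc.) reduce the coefficients to
\begin{equation*}
f_{(k)} \;=\; \tfrac{1}{2\sinh\sigma^k},\qquad f_{(k,l,1)} \;=\; \tfrac{1}{2\sinh\tfrac{\sigma^k+\sigma^l}{2}},\qquad f_{(k,l,2)} \;=\; \tfrac{1}{2\sinh\tfrac{\sigma^k-\sigma^l}{2}}.
\end{equation*}

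A direct computation then produces three bracket identities: $[\bd L_k,\bd L_j]=0$; $[\bd L_j,\bd U_\alpha]=\bd L_j(\log f_\alpha)\,\bd U_\alpha$ (because $\bd T_\alpha$ commutes with $\partial_{\sigma^j}$ and annihilates $M$-functions); and $[\bd U_\alpha,\bd U_{\alpha'}]=f_\alpha f_{\alpha'}\,\bd T_{[\alpha,\alpha']}$. The first two give $\nabla_{\bd L_k}\bd L_k=0$ at once. For $\nabla_{\bd U_\alpha}\bd U_\alpha$, the $\bd L_j$-components are $g_\HH([\bd L_j,\bd U_\alpha],\bd U_\alpha)=\bd L_j(\log f_\alpha)$, while the $\bd U$-components vanish: by~\eqref{commutator_perp}, $[\alpha',\alpha]$ expands only in $\mathfrak u(n)$-basis elements distinct from $\alpha$, and the metric $g_\HH$ is diagonal in the corresponding frame by~\eqref{eq:sum_square}. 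A single derivative $\partial_{\sigma^j}\log(2\sinh(\cdot))$ then produces the stated $\coth$ coefficients and $(\bd L_k\pm\bd L_l)/2$ directions.

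The main obstacle is the combinatorial bookkeeping, in particular the verification that the tangential $\bd U$-components of $\nabla_{\bd U_\alpha}\bd U_\alpha$ vanish. This hinges on simultaneously using the trace-pairing orthogonality~\eqref{matrix_orth}, the diagonalization of $g_\HH$ in the orthonormal frame~\eqref{eq:sum_square}, and the normalization factors relating the un-normalized $\bd u$ fields of Section~\ref{sec:matrix} to the orthonormal $\bd U$ fields of Theorem~\ref{uvectors}. Once these three ingredients are aligned, each covariant derivative collapses to a single derivative of a hyperbolic logarithm.
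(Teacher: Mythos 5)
Your proposal is correct and follows essentially the same route as the paper's proof: the Koszul formula collapsed by constancy of the frame inner products, the identification $\bd L_k=\partial_{\sigma^k}$, the commutator-orthogonality \eqref{commutator_perp} together with the diagonality of $g_{\mathcal H}$ in the $\bd u$-frame to kill the tangential components, and a single $\sigma$-derivative of the normalizing factors $1/(2\sinh\sigma^k)$, $1/(2\sinh\frac{\sigma^k\pm\sigma^l}{2})$ to produce the $\coth$ coefficients. The only (cosmetic) difference is that the paper obtains $\nabla_{\bd L_k}\bd L_k=0$ by citing Siegel's fact that the $\sigma^k$-translations are unit-speed geodesics, whereas you deduce it directly from the bracket identities and the Koszul formula, which is equally valid and slightly more self-contained.
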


\begin{proof}
    For a vector field $\bd V\in\mathfrak X(T\mathcal H)$, We could compute $\nabla_{\bd V}\bd V$ by computing $\langle \nabla_{\bd V}\bd V,\bd V'\rangle$ using Koszul's formula \cite[Theorem 3.6, Page 55]{do1992riemannian}, for $\bd V'=\sum_{k=1}^n a_k\bd L_k+\sum_{i=1}^{n^2}b_i \bd U_i$ with arbitrary constant coefficients $a_k$'s and $b_i$'s, where $\langle\cdot,\cdot\rangle=g_{\mathcal H}(\cdot,\cdot)$.

    Koszul's formula is simplified in our case because all inner products between $\bd U$ and $\bd L$ vectors are constants. For example,
    \begin{align}
        \langle \nabla_{\bd U_i}\bd U_i,\bd L_k\rangle=&\bd U_i\langle\bd U_i,\bd L_k\rangle-\frac12\bd L_k\langle \bd U_i,\bd U_i\rangle-\langle[\bd U_i,\bd L_k],\bd U_i\rangle=\langle [\bd L_k,\bd U_i],\bd U_i\rangle\,,
    \end{align}
    where the first two terms vanish because $\langle \bd U_i,\bd U_i\rangle=1$ and $\langle \bd U_i,\bd L_k\rangle=0$.

    We give a proof of the expression of $\nabla_{\bd L_k}\bd L_k$ and $\nabla_{\bd U_{(k,l,2)}}\bd U_{(k,l,2)}$, then the computations are similar for others.
    
    To prove $\nabla_{\bd L_k}\bd L_k=0$, we need a fact discovered by Siegel in \cite[Theorem 3, Page 3]{Siegel}, implying that the translation of each $\sigma^k$ is a geodesic. Also, notice that 
    \begin{align}\label{eq:L_orthonormal}
        \bd L_k(\sigma^l)=&(2\cosh^2\frac{\sigma^l}2)\bd L_k(\tanh \frac{\sigma^l}2)
        =(2\cosh^2\frac{\sigma^l}2)\bd L_k(\mu_l)=\delta_{kl}\,,
    \end{align}
    which means $\bd L_k=\big(\frac{\partial}{\partial \sigma^k}\big)$, say $\bd L_k$ is the unit vector field tangent to the geodesic of $\sigma^k$ translation. Therefore $\nabla_{\bd L_k}\bd L_k=0$.
 
    For $\bd U_{(k,l,2)}$, according to the formula above, we first need to distinguish those vector fields that are commutative with $\bd U_{(k,l,2)}$, or whose commutator with $\bd U_{(k,l,2)}$ are perpendicular to $\bd U_{(k,l,2)}$.

    We first argue that $\langle \nabla_{\bd U_i}\bd U_i,\bd U_j\rangle=0$ for all $\bd U_i,\bd U_j$ vectors. This is a result from \eqref{commutator_perp}: the commutator $[\bd U_i,\bd U_j]$ does not contain $\bd U_i$ or $\bd U_j$ term.

    For $\langle \nabla_{\bd U_i}\bd U_i,\bd L_m\rangle$, $\bd u_{(k,l,2)}$ is by definition \eqref{tangent_vectors2} commutative with $\bd l_m$ (thus $\bd L_m$), so 
    \begin{align}
        [\bd L_m,\bd U_{(k,l,2)}]=&[(\frac{\partial}{\partial\sigma^m}),\frac1{2\sinh(\frac{\sigma^k-\sigma^l}2)}\bd u_{(k,l,2)}]\\
        =&(\frac{\partial}{\partial\sigma^m}(\frac{1}{2\sinh(\frac{\sigma^k-\sigma^l}2)}))\bd u_{(k,l,2)}\nonumber\\
        =&-\coth(\frac{\sigma^k-\sigma^l}2)(\frac{\delta_{km}-\delta_{lm}}2)\bd U_{(k,l,2)}\,.\nonumber
    \end{align}

    Thus, $\nabla_{\bd U_{(k,l,2)}}\bd U_{(k,l,2)}$ is given by orthonormal decomposition below:
    \begin{align}
    \nabla_{\bd U_{(k,l,2)}}\bd U_{(k,l,2)}=&\sum_{m=1}^n
        \langle \nabla_{\bd U_{(k,l,2)}}\bd U_{(k,l,2)},\bd L_m\rangle\bd L_m\\
        =&\sum_{m=1}^n\langle [\bd L_m,\bd U_{(k,l,2)}],\bd U_{(k,l,2)}\rangle\bd L_m\nonumber\\
        =&\sum_{m=1}^n-\coth(\frac{\sigma^k-\sigma^l}2)(\frac{\delta_{km}-\delta_{lm}}2)\bd L_m\nonumber\\
        =&-\coth(\frac{\sigma^k-\sigma^l}2)(\frac{\bd L_k-\bd L_l}2)\,.\nonumber
    \end{align}
\end{proof}

\subsection{Evolution of spectrum}\label{sec43}
Now it suffice to prove theorem \ref{thm1}, i.e. to compute the evolution of spectrum $\lambda(\bd Z_t)=(\tanh^2{\frac{\sigma^1}2},...,\tanh^2{\frac{\sigma^n}2})$.

\begin{proof}[Proof of theorem 1]
    Let $ M_t= M(\bd Z_t)$. By definition \eqref{sde1}, the evolution of $ M_t$ satisfies the following SDE:
    \begin{align}
        \ud  M_t=&\sum_{i=1}^{n^2}\Big(\bd U_i( M_t)\circ\ud \tilde W^i_t-\frac12\nabla_{\bd U_i}\bd U_i( M_t)\ud t\Big)\\
        &\quad+\sqrt{\frac2\beta}\sum_{k=1}^{n}\Big(\bd L_k( M_t)\circ\ud W^k_t-\frac12\nabla_{\bd L_k}\bd L_k( M_t)\ud t\Big)\nonumber\\
        =&\sum_{i=1}^{n^2}-\frac12\nabla_{\bd U_i}\bd U_i( M_t)\ud t+\sqrt{\frac2\beta}\sum_{k=1}^n\bd L_k( M_t)\circ\ud W^k_t\,,\nonumber
    \end{align}
where $\{\tilde W^i_t\}_{i=1}^{n^2}$ and $\{W^k_t\}_{k=1}^n$ are $n^2+n$ independent Wiener processes.

Two facts were used in the computation above: $\bd U_i(\ud  M)=0$ for all $\bd U_i\in \{\bd U_i\}_{i=1}^{n^2}$ by definition \eqref{tangent_vectors2}, and $\nabla_{\bd L_k}\bd L_k=0$ for $k=1,...,n$ from Lemma \ref{lemCoD}.


Using the expression of $\nabla_{\bd U_i}\bd U_i$ in \eqref{covariant_derivative}, and $\bd L_k(\mu_l)=\frac1{2\cosh^2{\frac{\sigma^k}2}}\delta_{kl}$ which is equivalent to $\bd L_k( M_t)=\frac{e_ke_k^T}{2\cosh^2{\frac{\sigma^k}2}}$, we can write down the SDE exactly as:
\begin{align}
    \ud  M_t=&\sum_{k=1}^n \ud\big(\tanh(\frac{\sigma^k}2)\big)e_ke_k^T\\
    =&\sum_{k=1}^n\frac12\coth(\sigma^k_t)\frac{e_ke_k^T}{2\cosh^2{\frac{\sigma^k}2}}\ud t
    +\sum_{1\leq k<l\leq n}\frac12\coth(\frac{\sigma^k_t+\sigma^l_t}2)\frac{\frac{e_ke_k^T}{2\cosh^2{\frac{\sigma^k}2}}+\frac{e_le_l^T}{2\cosh^2{\frac{\sigma^l}2}}}2\ud t\nonumber\\
    &+\sum_{1\leq k<l\leq n}\frac12\coth(\frac{\sigma^k_t-\sigma^l_t}2)\frac{\frac{e_ke_k^T}{2\cosh^2{\frac{\sigma^k}2}}-\frac{e_le_l^T}{2\cosh^2{\frac{\sigma^l}2}}}2\ud t+\sqrt{\frac2\beta}\sum_{k=1}^n \frac{e_ke_k^T}{2\cosh^2{\frac{\sigma^k}2}}\circ\ud W^k_t\nonumber\\
    =&\sum_{k=1}^n\Big(\frac1{2\cosh^2(\frac{\sigma^k}2)}\times\frac12\big(\coth(\frac{\sigma^k_t}2)+\sum_{l\neq k}\frac{\sinh(\sigma^k_t)}{\cosh(\sigma^k_t)-\cosh(\sigma^l_t)}\big)\ud t\nonumber\\
    &\quad+\sqrt{\frac2\beta}\frac1{2\cosh^2(\frac{\sigma^k}2)}\circ\ud W^k_t\Big)e_ke_k^T\nonumber
\end{align}

As $\{e_ke_k^T\}_{k=1}^n$ are linearly independent, the coefficients on the two sides coincide, and we get the evolution of $\tanh(\frac{\sigma^k_t}2)$. Using chain rule we find the evolution of $\sigma^k_t$:
\begin{align}
    \ud\sigma^k_t=&\frac12\big(\coth(\frac{\sigma^k_t}2)+\sum_{l\neq k}\frac{\sinh(\sigma^k_t)}{\cosh(\sigma^k_t)-\cosh(\sigma^l_t)}\big)\ud t+\sqrt{\frac2\beta}\times 1\circ\ud W^k_t\nonumber\\
    =&\frac12\big(\coth(\frac{\sigma^k_t}2)+\sum_{l\neq k}\frac{\sinh(\sigma^k_t)}{\cosh(\sigma^k_t)-\cosh(\sigma^l_t)}\big)\ud t+\sqrt{\frac2\beta}\ud W^k_t\nonumber
\end{align}
which is \eqref{eq:siegel-eig2}.
\end{proof}



\section{Proof of Theorem 2}
\label{sec:thm2-proof}
In this section, we evaluate the Boltzmann entropy $S: \mathcal{Q} \to \R$ defined in equation~\eqref{eq:entropy} and prove Theorem~\ref{thm2}.


\subsection{Orbit volume}
For each $\lambda\in\mathcal{Q}$, the orbit $\mathcal O_{\lambda}$ is naturally a Riemannian submanifold of $\mathcal H$ equipped with the induced metric $g_{\mathcal O_\lambda}$, defined by
\begin{align}
    g_{\mathcal O_\lambda}(\bd v,\bd v')=g_\mathcal H(\bd v,\bd v')\,,
\end{align}
for $\bd v,\bd v'\in T\mathcal O_{\lambda}$.

We first introduce a collection of $n^2$ $1$-forms to be the basis of $\mathfrak X(T^*\mathcal O_{\lambda})$. Denoted $\{\omega^i\}_{i=1}^{n^2}\subset\mathfrak X (T^*\mathcal O_{\lambda})$ that are dual to the orthogonal basis $\{\bd u^i\}_{i=1}^{n^2}$ defined in \eqref{tangent_vectors1}. That is, 
\begin{align}
    \bd u^i(\omega^j)=\delta_{ij}\,,&&i,j=1,...,n^2\,,
\end{align}
using \eqref{eq:sum_square} the metric $g_{\mathcal O_\lambda}$ has the form
\begin{align}
    g_{\mathcal O_\lambda}=&\sum_{k=1}^n\frac{4(2\mu_k)^2}{(1-\mu_k^2)^2}\omega^{(k)}\otimes\omega^{(k)}\\
    &+\sum_{1\leq k< l\leq n}\frac{4(\mu_k+\mu_l)^2}{(1-\mu_k^2)(1-\mu_l^2)}\omega^{(k,l,1)}\otimes\omega^{(k,l,1)}\nonumber\\
    &+\sum_{1\leq k< l\leq n}\frac{4(\mu_k-\mu_l)^2}{(1-\mu_k^2)(1-\mu_l^2)}\omega^{(k,l,2)}\otimes\omega^{(k,l,2)}\nonumber\\
    =&4\Big(\sum_{k=1}^n\sinh^2\sigma^k\omega^{(k)}\otimes\omega^{(k)}+\sum_{1\leq k< l\leq n}\sinh^2(\frac{\sigma^k+\sigma^l}2)\omega^{(k,l,1)}\otimes\omega^{(k,l,1)}\nonumber\\
    &+\sum_{1\leq k< l\leq n}\sinh^2(\frac{\sigma^k-\sigma^l}2)\omega^{(k,l,2)}\otimes\omega^{(k,l,2)}\Big)\,.\nonumber
\end{align}

Let $M_{\lambda,\omega}$ be the matrix of $g_{\mathcal O_{\lambda}}$ under basis $\{\omega^i\}_{i=1}^{n^2}$, then the volume form is $\ud V=\sqrt{\det(M_{\lambda,\omega})}\bigwedge_{i=1}^{n^2}\omega^i$ and $\mathrm{vol}(\mathcal O_{\lambda})=\int_{\mathcal O_{\lambda}}\ud V$ gives the volume of orbit $\mathcal O_\lambda$.

Notice that $\det(M_{\lambda,\omega})$ only depends on $\lambda$, and is a constant on $\mathcal O_\lambda$, we can extract it out and get 
\begin{align}
    \mathrm{vol}(\mathcal O_\lambda)=2^{n^2}\prod_{k=1}^n\sinh\sigma^k\prod_{1\leq k<l\leq n}\sinh(\frac{\sigma^k+\sigma^l}2)\sinh(\frac{\sigma^k-\sigma^l}2)\int_{\mathcal O_\lambda}\bigwedge_{i=1}^{n^2}\omega^i
\end{align}
and we will argue that the integral $\int_{\mathcal O_\lambda}\bigwedge_{i=1}^{n^2}\omega^i$ is a constant that does not depend on $\lambda$, so we have the following 
\begin{theorem}
    For $\lambda\in \mathcal Q$, the volume of orbit $\mathcal O_\lambda$ is given by 
    \begin{align}
        \mathrm{vol}(\mathcal O_\lambda)=C_n\prod_{k=1}^n\sinh\sigma^k\prod_{1\leq k<l\leq n}\sinh(\frac{\sigma^k+\sigma^l}2)\sinh(\frac{\sigma^k-\sigma^l}2)
    \end{align}
    where $C_n$ is a constant that only depends on $n$.
\end{theorem}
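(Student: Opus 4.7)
The plan is to show that the integral $\int_{\mathcal O_\lambda}\bigwedge_{i=1}^{n^2}\omega^i$ equals a Haar-type volume of $U(n)$, up to a stabilizer factor that depends only on $n$. The key tool is the orbit parametrization
\begin{align*}
\pi_M: U(n)\longrightarrow \mathcal O_\lambda, \qquad Q\longmapsto QMQ^T,
\end{align*}
where $M=\mathrm{diag}(\mu_1,\dots,\mu_n)$ with $\mu_k=\tanh(\sigma^k/2)$ and we identify $\mathcal O_\lambda$ with its image under $\varphi$ in $\mathcal D$. Since $\lambda\in \mathcal Q$, the $\mu_k$ are distinct and strictly positive. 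Writing $QMQ^T=M$ as $QM=M\bar Q$ (using $Q^{-T}=\bar Q$ for $Q\in U(n)$) and comparing entries shows that the stabilizer of $M$ under this action is exactly the finite group $\{\pm 1\}^n$ of diagonal sign matrices, so $\pi_M$ is a smooth covering of degree $2^n$.

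The crux of the argument is to verify that $\pi_M^*\omega^i=\omega'^i$, where $\omega'^i$ is the 1-form on $U(n)$ dual to the left-invariant vector field $\bd u'^i$ from \eqref{tangent_vectors2}. This is immediate from the pushforward identity $\pi_*\bd u'^i=\bd u^i$ recorded in Section~\ref{sec:matrix} together with duality: for any $j$,
\begin{align*}
(\pi_M^*\omega^i)(\bd u'^j) = \omega^i((\pi_M)_*\bd u'^j) = \omega^i(\bd u^j) = \delta_{ij}.
\end{align*}
Each $\bd u'^i$ has the form $Q\mapsto Q\xi$ for a fixed $\xi$ in the orthonormal basis of $\mathfrak u(n)$ from Section~\ref{sec:matrix}, so the $\omega'^i$ are left-invariant 1-forms on $U(n)$; hence $\bigwedge_{i=1}^{n^2}\omega'^i$ is a left-invariant top-degree form on $U(n)$, i.e., a fixed nonzero scalar multiple of Haar measure.

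Applying the change-of-variables formula for the degree-$2^n$ covering $\pi_M$ then gives
\begin{align*}
\int_{\mathcal O_\lambda}\bigwedge_{i=1}^{n^2}\omega^i = \frac{1}{2^n}\int_{U(n)}\pi_M^*\Bigl(\bigwedge_{i=1}^{n^2}\omega^i\Bigr) = \frac{1}{2^n}\int_{U(n)}\bigwedge_{i=1}^{n^2}\omega'^i,
\end{align*}
a finite positive constant depending only on $n$. Substituting into the display preceding the theorem statement completes the proof with $C_n:=2^{n^2-n}\int_{U(n)}\bigwedge_i\omega'^i$. The only nontrivial step is the stabilizer computation, which depends crucially on the hypothesis $\lambda\in\mathcal Q$ forcing the $\mu_k$ to be distinct and positive; otherwise the stabilizer could jump in dimension and $\pi_M$ would cease to be a regular covering.
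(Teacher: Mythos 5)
Your proposal is correct, but it proves the key fact by a genuinely different route than the paper. The paper never evaluates $\int_{\mathcal O_\lambda}\bigwedge_{i=1}^{n^2}\omega^i$; it only shows the integral is independent of $\lambda$, by exhibiting the diffeomorphism $\phi:\mathcal O_\lambda\to\mathcal O_{\lambda'}$, $\varphi^{-1}(QMQ^T)\mapsto\varphi^{-1}(QM'Q^T)$, and checking $\phi^*(\omega^i\circ\phi)=\omega^i$ from the pushforward identity $\pi_*\bd u'_i=\bd u_i$. You instead compute the integral outright: you pull back along the parametrization $\pi_M:U(n)\to\mathcal O_\lambda$, use the same identity $(\pi_M)_*\bd u'_i=\bd u_i$ to get $\pi_M^*\omega^i=\omega'^i$, observe that $\bigwedge_i\omega'^i$ is a left-invariant (hence Haar) top form on $U(n)$, and divide by the degree of the covering, which requires your stabilizer computation $\mathrm{Stab}(M)=\{\pm1\}^n$ — a step the paper's argument never needs, and which is where the hypotheses $\mu_k$ distinct and nonzero (i.e.\ $\lambda\in\mathcal Q$) enter explicitly; your verification of this via $QM=M\bar Q$ is correct. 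What your route buys is an explicit identification $C_n=2^{n^2-n}\int_{U(n)}\bigwedge_i\omega'^i$ in terms of a Haar volume, plus a transparent explanation of why the result degenerates at the boundary of $\mathcal Q$; what the paper's route buys is that it sidesteps covering multiplicities and orientation bookkeeping entirely. The only point deserving a word of care in your write-up is that the frame $\{\bd u_i\}$, hence $\{\omega^i\}$, is itself only defined up to the sign ambiguity $Q\mapsto QS$, $S\in\{\pm1\}^n$, so the pullback and covering argument should strictly be phrased for the associated densities (or locally on an orientable chart); this is a cosmetic issue shared with the paper's own treatment, not a gap.
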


\begin{proof}
    We only need to show that the integral $\int_{\mathcal O_\lambda}\bigwedge_{i=1}^{n^2}\omega^i=\int_{\mathcal O_\lambda'}\bigwedge_{i=1}^{n^2}\omega^i$ for $\lambda,\lambda'\in\mathcal Q$. To do this we consider a bijection
    \begin{equation}
    \label{eq:vol1}
    \phi:\mathcal O_{\lambda}\to\mathcal O_{\lambda'}\nonumber\,,\\
    \text{ for }Z=\varphi^{-1}(Q M Q^T)\in\mathcal O_{\lambda}\,,\, \phi(Z)=\varphi^{-1}(Q M' Q^T)\,.
    \end{equation}
    i.e. $\phi$ is the map that preserves the $\mathrm U(n)$ component of $R$ coordinate.
    
    Since $\phi$ is a diffeomorphism between $\mathcal O_\lambda$ and $\mathcal O_{\lambda'}$, using $\phi^*(\omega^i\circ \phi)\in\mathfrak X(T^*\mathcal O_{\lambda})$, the pullback of $\omega^i\circ\phi:\mathcal O_{\lambda}\to T^*\mathcal O_{\lambda'}$, we can use change of variable formula to get
    \begin{align}
       \int_{\mathcal O_{\lambda'}}\bigwedge_{i=1}^{n^2}\omega^i=&\int_{\phi(\mathcal O_{\lambda})}\bigwedge_{i=1}^{n^2}\omega^i\\
        =&\int_{\mathcal O_{\lambda}}\bigwedge_{i=1}^{n^2}\phi^*(\omega^i\circ \phi)\,,\nonumber
    \end{align}

    Now we will show that $\phi^*(\omega^i\circ \phi)=\omega^i$. This is the direct result of $\phi_*(\bd u_i)=\bd u_i\circ\phi$, to prove which consider $Z'=\phi(Z)$ for $Z\in\mathcal O_\lambda$ and an arbitrary function $f\in C^\infty(\mathcal O_{\lambda'})$, then we need to prove that $\phi_*(\bd u_i(Z))f=\bd u_i(Z')f$.

    Let the matrix-valued derivative $\frac{\partial f}{\partial Q}$ defined by $\ud f=\mathrm{Tr}(\frac{\partial f}{\partial Q}\ud Q)$, we have 
    \begin{align}
        \phi_*(\bd u_i(Z))f=&\bd u_i(f\circ\phi(Z))=\mathrm{Tr}\Big((\frac{\partial f}{\partial Q})_{Z'}(\bd u_i(\ud Q))_Z\Big)\,,\nonumber\\
        \bd u_i(Z')f=&\mathrm{Tr}\Big((\frac{\partial f}{\partial Q})_{Z'}(\bd u_i(\ud Q))_{Z'}\Big)\nonumber\,,
    \end{align}
    where we used the fact that $Z$ and $Z'$ has the same $Q$ component. As $(\bd u_i(\ud Q))_{Z'}=(\bd u_i(\ud Q))_Z$ from \eqref{tangent_vectors2}, we proved that $\phi_*(\bd u_i(Z))f=\bd u_i(Z')f$.
    
    Then by 
    \begin{align}
        \bd u_j(\phi^*(\omega^i\circ \phi))=\phi_*(\bd u_j)(\omega^i\circ\phi)=(\bd u_i\circ\phi)(\omega^i\circ\phi)=\delta^i_j\,,
    \end{align}
    we proved that $\phi^*(\omega^i\circ \phi)=\omega^i$.

    So the integral is a constant:
    \begin{align}
        \int_{\mathcal O_{\lambda'}}\bigwedge_{i=1}^{n^2}\omega^i=\int_{\mathcal O_{\lambda}}\bigwedge_{i=1}^{n^2}\phi^*(\omega^i\circ \phi)=\int_{\mathcal O_{\lambda}}\bigwedge_{i=1}^{n^2}\omega^i\,.
    \end{align}
\end{proof}

Take logarithm and using the formula $\sinh(\frac{a+b}2)\sinh(\frac{a-b}2)=\frac{\cosh(a)-\cosh(b)}2$, we get the Boltzmann entropy
\begin{align}
    S(\sigma)=\log\mathrm{vol}(\mathcal O_\lambda)=\sum_{k=1}^n\log\sinh\sigma^k+\sum_{1\leq k<l\leq n}\log|\cosh\sigma^k-\cosh\sigma^l|+c_n\,.
\end{align}

\subsection{Metric and gradient flow}\label{sec52}
We have already computed the Boltzmann entropy function $S(\sigma)$, and to find a gradient flow we will also need a metric $g_{\Lambda}$ in the spectrum space $\mathcal Q$. The natural selection is to make the projection $\pi:\mathcal H\to\mathcal Q$ a Riemannian submersion.

Now as we already have an orthonormal basis of $T\mathcal H$ and it is clear that
\begin{align}
    \mathrm{Ker}(\ud\pi)=\mathrm{Span}\Big(\{\bd U_i\}_{i=1}^{n^2}\Big)\,,\qquad \mathrm{Ker}(\ud\pi)^{\perp}=\mathrm{Span}\Big(\{\bd L_k\}_{k=1}^n\Big)\,.
\end{align}

By definition of a Riemannian submersion, $\{\ud\pi(\bd L_k)\}_{k=1}^n$ will be an orthonormal basis in $T\mathcal Q$. Using \eqref{eq:L_orthonormal} we have 
\begin{align}
    g_{\mathcal Q}=\sum_{k=1}^n \ud\sigma^k\ud\sigma^k\,,\qquad
    (g_{\mathcal Q})^{-1}=\sum_{k=1}^n \bd L_k\otimes \bd L_k\,.
\end{align}

Therefore, the gradient of $S(\sigma)$ is just the usual differential $\mathrm{grad}\,S(\ud\sigma^k)=\frac{\partial S}{\partial \sigma^k}$. Comparing SDE \eqref{eq:siegel-eig2} and the entropy \eqref{eq:entropy-formula} we conclude that the evolution of $(\sigma^1,...,\sigma^n)$ is just the Langevin equation maximizing Boltzmann entropy $S$.

\section{Well-posedness and Non-colliding property}
\label{sec:wellposedness}
In this section, we prove the existence and uniqueness of strong solution of SDE \eqref{eq:siegel-eig2}, when $\beta\geq 2$ and the initial condition satisfies $\lambda_0\in\mathcal Q$, i.e. $\sigma_0=(\sigma^i_0)_{i=1}^n$ are all distinct and non-zero. The essential observation is that Boltzmann entropy $S(\sigma_t)$ is a local submartingale and with probability 1 it is bounded below, which avoids collision of eigenvalues. This strategy is commonly used for the well-posedness of Dyson Brownian motion in random matrix theory, and our proof adapts arguments used in~\cite{AGZ,BDS}.

\begin{theorem}\label{thm:non-collision}
    When $\beta\geq 2$, with initial condition $\sigma_0=(\sigma^i_0)_{i=1}^n$ such that $S(\sigma_0)>-\infty$, there exists a unique strong solution to the SDE \eqref{eq:siegel-eig2}, and $S(\sigma_t)>-\infty$ for all $t>0$ almost surely.
\end{theorem}

Our overall strategy is to consider an auxiliary SDE with a cut-off of the coefficients in \eqref{eq:siegel-eig2} such that well-posedness is guaranteed, and properly define some stopping times so that the solution to the auxiliary SDE also solves SDE~\eqref{eq:siegel-eig2} before they stopped. We then define the solution of SDE \eqref{eq:siegel-eig2} to be equal to the solution of auxiliary SDE before stopping, then we let the stopping times goes to infinity to obtain a solution almost surely defined for all $t>0$.

The auxiliary SDE is as follows: define $N(\sigma)=\log(\sum_{i=1}^n\cosh\sigma^i)$, for $k>-S(\sigma_0),K>N(\sigma_0)$, let $\sigma^{(k,K)}_t$ solves the SDE as follows:
    \begin{align}\label{eq:aux}
        \ud(\sigma^{(k,K)}_t)^i=\eta(\sigma^{(k,K)}_t,k,K)\Big(\frac12(\coth(\sigma^{(k,K)}_t)^i+\sum_{j\neq i}\frac{\sinh(\sigma^{(k,K)}_t)^i}{\cosh(\sigma^{(k,K)}_t)^i-\cosh(\sigma^{(k,K)}_t)^j})\ud t+\sqrt{\frac2\beta}\ud W^i_t\Big)\,,
    \end{align}
    where $\eta(\sigma,k,K)=h(\frac{-S(\sigma)}{k})h(\frac{N(\sigma)}{K})$ and $h$ is a smooth function with
    \begin{align}
        h(x)=\begin{cases}
            1& x\leq 1\\
            0& x\geq 2
        \end{cases}\,.
    \end{align}
    Thus, when $S(\sigma)>-k$, $N(\sigma)<K$, the SDE \eqref{eq:aux} and the SDE \eqref{eq:siegel-eig2} coincides.

Now define two stopping times 
\begin{align}
    \tau_{k,K}:=&\inf\{t>0|S(\sigma^{(k,K)}_t)\leq -k\}\,,\\
    T_{k,K}:=&\inf\{ t>0|N(\sigma^{(k,K)}_t)\geq K\}\,.
\end{align}
Notice that when $S(\sigma)>-k$ and $N(\sigma)<K$, $\sigma^i, S(\sigma),\frac{\partial S}{\partial \sigma^i}$ are all bounded, where $\frac{\partial S}{\partial \sigma^i}=\ee^{-S}\frac{\partial (\ee^S)}{\partial \sigma^i}$ is bounded because $\ee^S$ is smooth in $\{\sigma\in\mathbb R^n|N(\sigma)\leq K\}$. We now have the following two Lemmas for $T_{k,K}$ and $\tau_{k,K}$.

\begin{lemma}\label{lem:TK}
    For $t>0, k>-S(\sigma_0)$, $\lim_{K\to\infty}\mathbb P[T_{k,K}<t\wedge \tau_{k,K}]=0$ uniformly in $k$.
\end{lemma}

\begin{lemma}\label{lem:tauk}
    For $K>N(\sigma_0),t>0$, $\lim_{k\to\infty}\mathbb P[\tau_{k,K}<T_{k,K}\wedge t]=0$.
\end{lemma}

\begin{corollary}\label{col:finite}
    Given $t>0$, almost surely there exists $k_*>-S(\sigma_0),K_*>N(\sigma_0)$ such that $t<\tau_{k_*,K_*}\wedge T_{k_*,K_*}$.
\end{corollary}
\begin{proof}
    For each $n\in\mathbb N$, there exists $K_n$ such that 
    \begin{align}
    \mathbb P[T_{k,K_n}<t\wedge \tau_{k,K_n}]<2^{-n}\,,    
    \end{align}
    for all $k>-S(\sigma_0)$ by Lemma \ref{lem:TK}. There also exists $k_n$ such that 
    \begin{align}
    \mathbb P[\tau_{k_n,K_n}<t\wedge T_{k_n,K_n}]<2^{-n}\,,
    \end{align}
    by Lemma \ref{lem:tauk}.

    Since $\mathbb P[t>T_{k_n,K_n}\wedge \tau_{k_n,K_n}]\leq \mathbb P[\tau_{k_n,K_n}<t\wedge T_{k_n,K_n}]+\mathbb P[T_{k_n,K_n}<t\wedge \tau_{k_n,K_n}]$
    \begin{align}
        &\sum_{n=1}^\infty \mathbb P[t>T_{k_n,K_n}\wedge \tau_{k_n,K_n}]<\infty
    \end{align}
    by the Borel-Cantelli Lemma, with probability 1, $\{t>T_{k_n,K_n}\wedge \tau_{k_n,K_n}\}_{n=1}^\infty$ happens finitely often. Hence, there exist an integer $m$ such that $t\leq T_{k_m,K_m}\wedge \tau_{k_m,K_m}$.
\end{proof}

Corollary \ref{col:finite} leads to proof of Theorem \ref{thm:non-collision} directly:
\begin{proof}[Proof of Theorem \ref{thm:non-collision}]
    For all $k>-S(\sigma_0),K>N(\sigma_0)$, SDE \eqref{eq:aux} has a unique strong solution $\sigma^{(k,K)}_t$ because coefficients of drift and diffusion are all Lipschitz. 

    For each $t>0$, almost surely there exists $k_*,K_*$ such that $t<\tau_{k_*,K_*}\wedge T_{k_*,K_*}$, we define  $\sigma_t=\sigma^{(k_*,K_*)}_t$. By uniqueness of solution of SDE \eqref{eq:aux}, such definition is well-defined and does not depend on the choice of $k_*,K_*$, because $\sigma^{(k,K)}_t=\sigma^{(k',K')}_t$ when $t\leq \tau_{k,K}\wedge T_{k,K}$ and $t\leq \tau_{k',K'}\wedge T_{k',K'}$.

    It is clear that $\sigma_t$ is a solution to SDE \eqref{eq:siegel-eig2}, and uniqueness follows the uniqueness of $\sigma^{(k,K)}_t$ of SDE \eqref{eq:aux}.

    The finiteness of $S(\sigma_t)$ comes from that $t<\tau_{k_*,K_*}$ which means $S(\sigma_t)>-k_*$.
\end{proof}

The proof of Lemma \ref{lem:TK} and Lemma \ref{lem:tauk} are as follows:
\begin{proof}[Proof of Lemma \ref{lem:TK}]
    Consider $N^{k,K}_{t}=N(\sigma^{(k,K)}_{t})$, using It\^o's formula, when $t\leq \tau_{k,K}\wedge T_{k,K}$, we have 
    \begin{align}
        N^{k,K}_{t}
        =&N(\sigma_0)+\sqrt{\frac2\beta}\int_0^{t}\sum_{i=1}^n\frac{\sinh(\sigma^{(k,K)}_s)^i}{\sum_{l=1}^n\cosh(\sigma^{k,K}_s)^l}\ud W^i_s\\
        &+\int_0^{t}\big(\frac n2+\frac1\beta-\frac1\beta \frac{\sum_{i=1}^n\sinh^2(\sigma^{(k,K)}_s)^i}{(\sum_{i=1}^n\cosh(\sigma^{(k,K)}_s)^i)^2}\big)\ud s\nonumber\,.
    \end{align}

    As $\sigma^{(k,K)}_s$ is bounded for $0\leq s\leq t\wedge \tau_{k,K}\wedge T_{k,K}$, each term on the R.H.S are integrable. Take expectation of $N_{t\wedge \tau_{k,K}\wedge T_{k,K}}$:
    \begin{align}
        K\mathbb P[T_{k,K}< t\wedge \tau_{k,K}]\leq &\mathbb E[N_{t\wedge\tau_{k,K}\wedge T_{k,K}}]\\
        \leq& N(\sigma_0)+(\frac n2+\frac1\beta)\mathbb E[\int_0^{t\wedge\tau_{k,K}\wedge T_{k,K}}\ud s]\nonumber\\
        =&N(\sigma_0)+(\frac n2+\frac1\beta)\mathbb E[\int_0^{t\wedge\tau_{k,K}\wedge T_{k,K}}\ud s]\nonumber\\
        \leq &N(\sigma_0)+(\frac n2+\frac1\beta)t\nonumber\,,
    \end{align}
    so $\lim_{K\to \infty}\mathbb P[T_{k,K}<t\wedge \tau_{k,K}]=0$ uniformly in $k$.
    
\end{proof}

\begin{proof}[Proof of Lemma \ref{lem:tauk}]

    We show that when $\beta\geq 2$, $S(\sigma^{(k,K)}_{t\wedge \tau_{k,K}\wedge T_{k,K}})$ is a submartingale. We start with the following observation:
    \begin{align}
        \Delta S+|\nabla S|^2=c_n\,,
    \end{align}
    where $\Delta=\sum_{i=1}^n\frac{\partial^2}{\partial\sigma^i\partial\sigma^i}$ is the Laplacian in Euclidean sense, $|\nabla S|^2=\sum_{i=1}^n(\frac{\partial S}{\partial \sigma^i})^2$, and $c_n=\frac{n(n+1)(2n+1)}6>0$.

    When $t\leq \tau_{k,K}\wedge T_{k,K}$, using It\^o's formula and SDE \eqref{eq:siegel-eig2}, we have 
    \begin{align}
        S(\sigma^{(k,K)}_t)=&S(\sigma_0)+\int_0^t(\frac12|\nabla S|^2+\frac1\beta\Delta S)\ud s+\int_0^t\sum_{i=1}^n\sqrt{\frac2\beta}\frac{\partial S}{\partial \sigma^i}\ud W^i_s\\
        =&S(\sigma_0)+\int_0^t(\frac{c_n}\beta+(\frac12-\frac1\beta)|\nabla S|^2)\ud s+\int_0^t\sum_{i=1}^n\sqrt{\frac2\beta}\frac{\partial S}{\partial \sigma^i}\ud W^i_s\nonumber
    \end{align}

    When $0\leq s\leq t\wedge \tau_{k,K}\wedge T_{k,K}$, every integral above are integrable. Using the condition $\beta\geq 2$, we have 
    \begin{align}
        S(\sigma_0)<\mathbb E[S_{t\wedge \tau_{k,K}\wedge T_{k,K}}]
    \end{align}
    
    Also,
    $S_{t\wedge \tau_k\wedge T_K}$ is bounded above by a constant $C_K$ depending on $K$, we have 
    \begin{align}
        S_0<\mathbb E[S_{t\wedge \tau_{k,K}\wedge T_{k,K}}]<-k\mathbb P[\tau_{k,K}<t\wedge T_{k,K}]+C_K(1-\mathbb P[\tau_{k,K}<t\wedge T_{k,K}])\,,
    \end{align}
    which leads to $\lim_{k\to\infty}\mathbb P[\tau_{k,K}<t\wedge T_{k,K}]=0$ for given $t>0,K>N(\sigma_0)$.

\end{proof}

\bibliographystyle{amsplain}
\bibliography{siegel}
\end{document}